\newtheorem{theorem}{Theorem}[section] 
\newtheorem{definition}{Definition}[section]
\newtheorem{conjecture}{Conjecture}[section]
\newtheorem{lemma}{Lemma}[section] 
\newtheorem{proposition}{Proposition}[section]
\newcommand{\Rmnum}[1]{\expandafter\@slowromancap\romannumeral #1@}
  \newcommand{\Supp}{\operatorname{Supp}}
 \newcommand{\J}{\operatorname{J}}
 \newcommand{\HO}{\operatorname{H}}
 \newcommand{\genus}{\operatorname{genus}}
  \newcommand{\Res}{\operatorname{Res}}
  \newcommand{\End}{\operatorname{End}}
 \newcommand{\Aut}{\operatorname{Aut}}
\newcommand{\Gal}{\operatorname{Gal}}
\newcommand{\rank}{\operatorname{rank}}
\titleformat{\section}[block]{\color{black}\Large\bfseries\filcenter}{}{1em}{}
\titleformat{\subsection}[hang]{\bfseries}{}{1em}{}
\theoremstyle{remark}
\newtheorem{remark}{Remark}[section]
\begin{document}
\title{The Arithmetic of Curves Defined by Iteration}
\author{Wade Hindes\\
Department of Mathematics, Brown University\\
Providence, RI 02912\\
Contact Information: (810) 434-3401, whindes@math.brown.edu}
\date{\today}
\maketitle
\begin{abstract} We show how the size of the Galois groups of iterates of a quadratic polynomial $f$ can be parametrized by certain rational points on the curves $C_n:y^2=f^n(x)$ and their quadratic twists. To that end, we study the arithmetic of such curves over global and finite fields, translating key problems in the arithmetic of polynomial iteration into a geometric framework. This point of view has several dynamical applications. For instance, we establish a maximality theorem for the Galois groups of the fourth iterate of quadratic polynomials $x^2+c$, using techniques in the theory of rational points on curves. Moreover, we show that the Hall-Lang conjecture on integral points of elliptic curves implies a Serre-type finite index result for these dynamical Galois groups, and we use conjectural bounds for the Mordell curves to predict the index in the still unknown case when $f(x)=x^2+3$. Finally, we provide evidence that these curves defined by iteration have geometrical significance, as we construct a family of curves whose rational points we completely determine and whose geometrically simple Jacobians have complex multiplication and positive rank.            
\end{abstract}
\section{1. Introduction}{\label{Intro}}

\indent \indent  The arithmetic properties of iterated rational maps: heights, integer points in orbits, preperiodic points etc. provide many interesting problems, both directly and by analogy, in classical arithmetic geometry. In this paper, we study another object straddling both geometry and dynamics, the arboreal representation attached to a rational function. Specifically, in the case of quadratic polynomials $f=f_c(x)=x^2+c$, we translate key problems regarding the size of the Galois groups of iterates of $f$ into a geometric framework.

To state these results, we fix some notation. For $f=x^2+c$ let $G_n(f)=\Gal(f^n)$ and let $\mathbb{T}_n$ denote the set of roots of $f, f^2,\dots ,f^n$ together with $0$. Furthermore, set
\begin{equation}{\label{Arboreal}}
\mathbb{T}_\infty:=\bigcup _{n \geq 0} f^{-n}(0)\;\;\text{and}\;\; G_\infty=\varprojlim G_n(f).
\end{equation}  
If $f$ is irreducible, then $\mathbb{T}_n$ carries a natural $2$-ary rooted tree structure: $\alpha,\beta\in\mathbb{T}_n$ share an edge if and only if $f(\alpha) =\beta$. Furthermore, as $f$ is a polynomial with rational coefficients, $G_n(f)$ acts via graph automorphisms on $\mathbb{T}_n$. Hence, we have injections $G_n \hookrightarrow \Aut(\mathbb{T}_n)$ and $G_\infty \hookrightarrow \Aut(\mathbb{T}_\infty)$ called the \emph{arboreal representations} associated to $f$. A major problem in arithmetic dynamics, most notably because of its application to density questions in orbits \cite{Jones2}, is to study the size of these images. For a nice exposition on the subject, as well as the formulation for rational functions $\phi\in\mathbb{Q}(x)$, see \cite{R.Jones}. 

For a fixed stage $n$, a natural question to ask is which rational values of $c$ supply a polynomial $x^2+c$ whose $n$-th iterate is the first to have smaller than expected Galois group. When $n=4$, the only examples up to a very large height are $c=2/3$ and $c=-6/7$. Moreover, we prove that there are no such integer values (in contrast to the $n=3$ case in \cite{Me}) and formulate questions for larger $n$. Specifically, let \[S^{(n)}=\Big\{c\in\mathbb{Q}\Big\vert \;\;\big\vert\Aut(\mathbb{T}_{n-1}):G_{n-1}(f_c)\big\vert=1\;\; \text{and}\;\;\; \big\vert\Aut(\mathbb{T}_n):G_n(f_c)\big\vert>1\Big\}.\]
Then we use techniques in the theory of rational points on curves: Chabauty's method, unramified coverings, the Mordell-Weil sieve, and bounds on linear forms in logarithms, to deduce the following maximality result for the fourth iterate these quadratic polynomials.

\begin{restatable}{thm}{SmallFourth}\label{thm:SmallFourth}  Let $f_c(x)=x^2+c$. Then all of the following statements hold. 
\begin{enumerate} 
\item $S^{(4)}\cap\mathbb{Z}=\emptyset$. That is, if $c$ is an integer and $G_3(f_c)\cong\Aut(\mathbb{T}_3)$, then $G_4(f_c)\cong\Aut(\mathbb{T}_4)$.  
\item If $c\neq3$ is an integer and $G_2(f_c)\cong\Aut(\mathbb{T}_2)$, then $G_4(f_c)\cong\Aut(\mathbb{T}_4)$. 
\item If the curve $F_2: y^2=x^6 + 3x^5 + 3x^4 + 3x^3 + 2x^2 + 1$ has no rational points of height greater that $10^{100}$, then $S^{(4)}=\{2/3,-6/7\}$. 
\end{enumerate} 
\end{restatable}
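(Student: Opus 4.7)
The plan is to reduce each part of the statement to a problem about rational or integral points on explicit curves, and then invoke different tools from the arithmetic of curves according to whether we need integral or rational points. The starting point is a Stoll-Jones type discriminant criterion for maximality of $G_n(f_c)$: assuming $G_{n-1}(f_c) \cong \Aut(\mathbb{T}_{n-1})$, the stage-$n$ Galois group fails to be maximal if and only if a specific element of $\mathbb{Q}^\times/(\mathbb{Q}^\times)^2$ built from the critical orbit $c, f_c(c), \dots, f_c^{n-1}(c)$ becomes trivial. Letting $c$ vary and unwinding these square conditions gives, at stage four, a finite list of hyperelliptic equations whose rational (resp.\ integral) points parametrize the relevant subset of $S^{(4)}$. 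By passing to suitable unramified coverings and quotients one isolates the genus-two curve $F_2: y^2 = x^6 + 3x^5 + 3x^4 + 3x^3 + 2x^2 + 1$ appearing in (3) as the principal obstruction; the remaining obstruction curves will be shown to admit no new rational $c$-values beyond the ones already accounted for.

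For part (3), I would determine $F_2(\mathbb{Q})$ by combining Chabauty's method with the Mordell-Weil sieve. The first task is to compute $\Jac(F_2)(\mathbb{Q})$ by two-descent, and then verify that $\rank \Jac(F_2)(\mathbb{Q}) < 2 = \genus(F_2)$, so that Coleman-Chabauty applies and yields a $p$-adic bound on rational points in each residue class. The Mordell-Weil sieve is then run modulo a product of carefully chosen primes in order to eliminate all residue classes containing no global point. The hypothesis that $F_2$ has no rational point of height exceeding $10^{100}$ closes the remaining gap left by the sieve, and a direct calculation matches the surviving points of $F_2(\mathbb{Q})$ to $c = 2/3$ and $c = -6/7$.

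For part (1), I would argue that the equations cutting out $S^{(4)} \cap \mathbb{Z}$ have no integer solutions. The main input is the rapid growth $|f_c^k(0)| \sim |c|^{2^{k-1}}$ of the critical orbit: an integer-point equation of the form $y^2 = f_c^4(0)$ forces an elementary factorization $(y - c_3)(y + c_3) = $ (explicit small expression in $c$), which constrains $|c|$ to a very small range that can be checked directly; for any residual obstructions, one applies Baker-type bounds on linear forms in logarithms to the corresponding hyperelliptic equation to obtain an effective cap on $|c|$, followed by direct verification. For part (2), one must additionally handle the stage-three descent from $G_2$ maximal to $G_3$ maximal. The relevant integer analysis for stage three was carried out in \cite{Me}, where $c=3$ is singled out as the only integer with $G_2(f_c)$ maximal and $G_3(f_c)$ non-maximal; combining this with part (1) gives the conclusion of part (2).

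The main obstacle is part (3): making Chabauty plus the Mordell-Weil sieve on $F_2$ certify that $F_2(\mathbb{Q})$ contains no rational point beyond the small-height ones already located by search. Computing $\Jac(F_2)(\mathbb{Q})$ precisely and choosing sieve primes that genuinely close all residue classes modulo a realistic bound is the delicate step, and it is precisely the inability (so far) to close this gap unconditionally that forces the statement to remain contingent on the search bound $10^{100}$. Parts (1) and (2), by contrast, should be straightforward applications of the size bounds and of \cite{Me}.
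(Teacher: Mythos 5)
There is a factual error in your treatment of part (3) that the paper's own Remark~\ref{difficulty} makes explicit: Chabauty--Coleman does \emph{not} apply to $F_2$. A two-descent shows $\rank J(F_2)(\mathbb{Q}) = 3$, which exceeds the genus $g=2$, so there is no annihilating differential and the $p$-adic bound you describe is unavailable. Worse, the sextic $x^6 + 3x^5 + 3x^4 + 3x^3 + 2x^2 + 1$ has Galois group $S_6$, so by Zarhin's theorem $\End(J(F_2)) \cong \mathbb{Z}$; hence $F_2$ covers no lower-genus curve, and a two-cover descent would require elliptic Chabauty over a degree-$17$ number field. This is exactly why the result remains conditional: the Mordell--Weil sieve certifies only that any unknown point of $F_2(\mathbb{Q})$ has height greater than $10^{100}$, and part (3) simply \emph{assumes} there are none. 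Your plan to verify $\rank < g$ and close the gap with Chabauty would not get off the ground.

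On parts (1) and (2) you have the right overall strategy but the detailed accounting is off. Applying Lemma~\ref{Stoll-lemma} together with an explicit list of the seven quadratic subfields of $K_3$ (the content of Lemma~\ref{4th lemma}) produces seven hyperelliptic curves $F_1, \dots, F_7$, not the single equation $y^2 = f_c^4(0)$; the latter is the curve $F_0$ in the paper, used only to verify that $f_c^4$ remains irreducible, and its tidy factorization $(y - f_c^3(0))(y + f_c^3(0)) = c$ detects no element of $S^{(4)}$. The paper handles $F_1$ by genuine Chabauty--Coleman (rank $1 <$ genus $3$), determines $F_2(\mathbb{Z})$ unconditionally by Runge's method (your ``elementary factorization'' idea, but applied to the correct sextic), observes that $J(F_5)$ has rank zero, and reduces $F_3, F_4, F_6, F_7$ to $F_2$ or to rank-zero auxiliary curves via unramified double covers. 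The crucial point for part (1) is that the integral points on all seven curves are pinned down unconditionally because only $F_2(\mathbb{Z})$, not $F_2(\mathbb{Q})$, is needed; Runge suffices and no Baker-type bound is invoked. Baker bounds are a legitimate alternative in principle (the bibliography includes \cite{Integral}), but you would still have to run them against all seven curves, not $F_0$. Your reduction of part (2) to \cite{Me} together with part (1) matches the paper exactly.
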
  

In the proof of this and subsequent theorems, the key objects which parametrize the size of these dynamical Galois groups are the hyperelliptic curves \[C_n: y^2=f^n(x)\;\; \text{and}\;\; B_n: y^2=(x-c)\cdot f^n(x).\] Adding to the evidence for open image theorems in dynamics (see \cite{Me} and \cite{PrimitiveDivisors}), we use these curves defined by iteration and some standard conjectures in arithmetic geometry to prove the following theorem. 

\begin{restatable}{thm}{HallLang} Let $f(x)=x^2+c$\; for some integer $c$. If $c\neq-2$ and $-c$ is not a square, then all of the following statements hold:  
\begin{enumerate} 
\item The Hall-Lang conjecture implies that $\big|\Aut(\mathbb{T}_{\infty}):G_{\infty}(f)\big|$ is finite. 
\item If the weak form of Hall's conjecture for the Mordell curves holds with $C=100$ and $\epsilon=4$, then when $f(x)=x^2+3$, we have that $\big|\Aut(\mathbb{T}_\infty):G_{\infty}(f)\big|=2$.
\end{enumerate} 
\label{thm:Hall-Lang}
\end{restatable}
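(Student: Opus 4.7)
\medskip

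\noindent\textbf{Proof proposal.} The plan is to combine a Stoll--Jones-type squareness criterion for the arboreal Galois group with Hall--Lang-type bounds on integer points of the curves $C_n$, $B_n$, and their quadratic twists. The criterion says that $\big|\Aut(\mathbb{T}_\infty):G_\infty(f)\big|$ is finite if and only if the post-critical element $c_n\defeq f^n(0)$ fails to be a square in $K_{n-1}$ (the splitting field of $f^{n-1}$) for all but finitely many $n\geq 2$; the hypotheses $c\neq -2$ and $-c\notin(\mathbb{Q}^\times)^2$ are exactly the conditions that rule out preperiodicity of $0$ under $f$ (so $|c_n|$ grows doubly exponentially) and allow the level-$1$ obstruction to be trivialized.

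For part (1), I would split the analysis according to the square-free part $d_n$ of $c_n$. If $d_n=1$, then $(0,\sqrt{c_n})$ is an integer point on $C_n$; under the natural degree-$2^{n-2}$ morphism $C_n\to C_2$ given by $(x,y)\mapsto(f^{n-2}(x),y)$, it descends to the integer point $(c_{n-2},\sqrt{c_n})$ on the elliptic curve $C_2:y^2=f^2(x)$, and Hall--Lang bounds $|c_{n-2}|$ polynomially in $|c|$. Since $|c_n|$ grows doubly exponentially under the stated hypotheses, this forces $n$ into a finite range. The harder sub-case $d_n\neq 1$ forces $d_n$ to lie in the finite group of quadratic characters of $K_{n-1}$, whose prime divisors are confined to the primes ramified in $K_{n-1}$, i.e.~those dividing $c_1\cdots c_{n-1}$. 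The relation $c_n=d_n y^2$ then produces an integer point $(c_{n-2},y)$ on the quadratic twist $C_2^{(d_n)}:d_n y^2=f^2(x)$, and Hall--Lang applied uniformly across these twists, combined with primitive-prime-divisor arguments (in the spirit of the cited \cite{PrimitiveDivisors}) ensuring that $c_n$ eventually acquires new primes with odd multiplicity, forces only finitely many drops.

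For part (2), a direct calculation first exhibits a drop at level $3$: $c_3=147=3\cdot 7^2$, and since $K_2\supset\mathbb{Q}(\sqrt{-1},\sqrt{-3})\ni\sqrt{3}$ when $c=3$, we have $\sqrt{c_3}=7\sqrt{3}\in K_2$; hence $\big|\Aut(\mathbb{T}_\infty):G_\infty(f)\big|\geq 2$. To prove equality one must rule out any further drop for $n\geq 4$. Exploiting the invariant $3\,\|\,c_n$ (so that $c_n=3m_n$ with $m_{n+1}=3m_n^2+1$) together with $\sqrt{3}\in K_{n-1}$ for $n\geq 3$, the level-$n$ drop condition reduces to a squareness condition on $m_n$ modulo the finite character group of $K_{n-1}$. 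After a standard birational transformation applied to the relevant twist of $C_2$, each admissible branch is parametrized by integer points on a Mordell curve $y^2=x^3+k$ of explicit small $|k|$; the hypothesized weak Hall bound $|x|\leq 100|k|^4$ reduces each case to a finite explicit search that one checks directly to rule out any drop for $n\geq 4$.

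The principal obstacle is the $K_{n-1}$-square case in part~(1). The $\mathbb{Q}$-square case reduces transparently to integer points on the single elliptic curve $C_2$, but handling the general square-free part $d_n$ requires working with an entire family of twists $C_2^{(d)}$ whose heights grow with $d$; it is the uniformity of Hall--Lang across such a family (rather than the mere finiteness provided by Siegel's theorem) that is needed to balance the doubly-exponential growth of $|c_n|$ against the growing character group of $K_{n-1}$. The Zsygmondy/primitive-divisor input is also delicate here, as one must ensure that the primes contributing to $d_n$ are genuinely restricted to the discriminant of $K_{n-1}$.
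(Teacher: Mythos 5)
Your outline correctly identifies the core mechanism that the paper uses: Stoll's squareness criterion converts a non-maximal level~$n$ into the relation $c_n = d_n y^2$ with $\mathbb{Q}(\sqrt{d_n})\subset K_{n-1}$, this produces an integer point on a twist of a fixed genus-one member of the family, and Hall--Lang then gives a polynomial upper bound to pit against the doubly-exponential growth of $|c_n|$. Mapping that point to $C_2^{(d_n)}$ rather than to the paper's $B_1^{(d_n)}$ is a cosmetic difference in part~(1); both are genus-one twists with coefficients polynomial in $c$ and $d_n$.

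The gap is exactly at the step you flag as ``the principal obstacle,'' and the tool you reach for does not close it. The ramification argument gives $d_n$ squarefree with odd prime support among the primes dividing $c_1\cdots c_{n-1}$, hence $|d_n|\ll\prod_{j<n}|c_j|$. That bound is too weak: the right-hand side already has order $|c_{n-1}|^{1+o(1)}$, so Hall--Lang ($|c_{n-1}|\ll|d_n|^{O(1)}$) produces no contradiction. The decisive refinement in the paper is a \emph{halving} of the range: if an odd prime $p\mid d_n$, then $p\mid c_j$ for some $j<n$, and since also $p\mid c_n$ and $c_n\equiv c_{n-j}\pmod p$, one gets $p\mid c_{n-j}$ as well; thus every odd prime of $d_n$ divides some $c_k$ with $k\le\lfloor n/2\rfloor$. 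This gives $|d_n|\ll\prod_{j\le n/2}|c_j|\approx|c_{\lfloor n/2\rfloor+1}|$, and now the comparison $|c_{n-1}|\ll|c_{\lfloor n/2\rfloor+1}|^{O(1)}$ does bound $n$. Your proposal instead invokes a Zsygmondy/primitive-prime-divisor input to force finiteness, but this is both unavailable and unnecessary: it is precisely what ABC supplies in \cite{PrimitiveDivisors}, and if you had it you would not need Hall--Lang at all (a primitive odd-multiplicity prime of $c_n$ already contradicts $d_n\mid 2\prod_{j<n}c_j$). Also note that the restriction of the primes of $d_n$ to the discriminant of $K_{n-1}$ is a routine ramification fact, not a Zsygmondy statement. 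So the primitive-divisor step is a detour, and the genuinely needed step --- the halving --- is missing.

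For part~(2) the strategy is right but the model is wrong: when $c=3$ it is $B_1^{(d)}\colon dy^2=(x-3)(x^2+3)=(x-1)^3-8$, not $C_2^{(d)}$, that has $j$-invariant $0$, and $C_2$ is only $2$-isogenous to $B_1$, not birational. Passing through $B_1^{(d)}$ gives the Mordell curve $y^2=x^3-(2d)^3$; the weak Hall bound with $C=100,\ \epsilon=4$ then forces $n\le 13$, and a finite check isolates the $n=3$ drop ($c_3=3\cdot7^2$), yielding index $2$ as you say.
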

\begin{remark} This result is analogous to a theorem of Serre for non CM elliptic curves \cite{B-J}. Moreover, this analogy is particularly interesting since when $c=-2$, the relevant family of curves actually has CM; see Theorem \ref{thm:Chebychev} below. Assuming the ABC conjecture holds over $\mathbb{Q}$, this ``Finite Index Conjecture" was proven independently by the author in \cite{Me} and by Gratton, Nguyen, and Tucker in \cite{PrimitiveDivisors} . 
\end{remark}    

We begin section $2$ by discussing some general arithmetic properties of $C_n$ and $B_n$. Specifically, we address problems related to the torsion subgroups and simple factors of their Jacobians. To do this, we extract information from the specific case when $c=-2$ and $f$ is a Chebychev polynomial. In this case, the curves in this family have some very special properties. In pariticular, let $J(B_n)$ be the Jacobian of $B_n$. Then we have the following theorem.  
\begin{restatable}{thm}{Chebychev}
\label{thm:Chebychev} When $f(x)=x^2-2$, all of the following statements hold: 
\begin{center}\begin{enumerate}
\item$J(B_n)$ is an absolutely simple abelian variety  that has complex multiplication by $\mathbb{Q}(\zeta+\zeta^d)$, where $\zeta$ is a primitive $2^{n+2}\text{-th}$ root of unity and $d=2^{n+1}-1$. 
\item Consider $B_n/\,\mathbb{F}_p$ and let $\chi(B_n,t)$ be the characteristic polynomial of Frobenius. 
\begin{enumerate} 
\item If $p\equiv5\bmod{8}$, then $\chi(B_n,t)=t^{2^n}+p^{2^{n-1}}$ for all $n\geq1$. 
\item If $p\equiv3\bmod{8}$, then $\chi(B_n,t)=t^{2^n}+p^{2^{n-1}}$ for all $n\geq2$.
\end{enumerate} 
\item $J(B_n)(\mathbb{Q})_{Tor}\cong\mathbb{Z}/2\mathbb{Z}$\; for all $n\geq1$. Therefore, $\rank\big(J\left(B_n\right)(\mathbb{Q})\big)\geq1$\; for all $n\geq2$. Furthermore, we have that\; $\rank\big(J\left(C_n\right)(\mathbb{Q})\big)\geq n-2$\; for all $n\geq1$.   
\item $B_n(\mathbb{Q})=\{\infty,(-2,0),(0,\pm{2})\}$ for all $n\geq2$. 
\end{enumerate}
\end{center}
\end{restatable}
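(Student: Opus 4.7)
My plan is to use the Chebyshev identity
\[
f^n(\zeta + \zeta^{-1}) = \zeta^{2^n} + \zeta^{-2^n}
\]
as the organizing principle for all four parts. Setting $\zeta = w^2$ (so that $\sqrt{x+2} = w + w^{-1}$) and clearing denominators produces an explicit degree-$4$ rational map from the cyclotomic hyperelliptic curve
\[
D_n\colon Y^2 = W^{2^{n+2}} + 1
\]
onto $B_n$, with deck transformations generated by $w \mapsto -w$ and $w \mapsto w^{-1}$; this covering underpins the entire proof.

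For Part 1, the Jacobian $J(D_n)$ carries the classical CM by $\mathbb{Z}[\zeta_{2^{n+2}}]$ coming from the $\mu_{2^{n+2}}$-action $w \mapsto \xi w$. Passing through the quotient by $\langle w \mapsto -w, w \mapsto w^{-1}\rangle$, the Jacobian $J(B_n)$ inherits a CM action by the subring fixed by the induced involution $\zeta \mapsto -\zeta^{-1}$, which is precisely $\mathbb{Z}[\zeta + \zeta^d]$ for $d = 2^{n+1} - 1$ (since $\zeta^d = -\zeta^{-1}$). Absolute simplicity follows because the resulting CM type is primitive. For Part 2, the CM then determines the Frobenius characteristic polynomial via the decomposition of $p$ in $\mathbb{Q}(\zeta + \zeta^d)$: under the stated congruences cyclotomic reciprocity shows that the Frobenius eigenvalues form a single Galois orbit of $2^n$-th roots of $-p^{2^{n-1}}$, yielding $\chi(B_n, t) = t^{2^n} + p^{2^{n-1}}$. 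The exclusion of $n = 1$ when $p \equiv 3 \pmod 8$ corresponds to an extra splitting at the tame level of the cyclotomic tower, and the $n = 1$, $p \equiv 5 \pmod 8$ case cross-checks against the standard fact $a_p(B_1) = 0$.

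For Part 3, the Weierstrass point $(-2,0)$ produces a $2$-torsion class in $J(B_n)(\mathbb{Q})$. To see this is the entire torsion, take $p \equiv 5 \pmod 8$ of good reduction and use Part 2 to compute $\bigl|J(B_n)(\mathbb{F}_p)\bigr| = 1 + p^{2^{n-1}}$, whose $2$-adic valuation is exactly $1$ (since $p^2 \equiv 1 \pmod{16}$); varying $p$ over other residue classes eliminates any odd torsion by a Chebotarev-style argument. For the rank assertion on $B_n$, the point $(0, \pm 2)$ is rational for $n \geq 2$ (using $f^n(0) = 2$ in that range), and reduction modulo $p$ shows that $[(0,2) - \infty]$ has order exceeding $2$ in $J(B_n)(\mathbb{F}_p)$, hence is non-torsion. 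The rank assertion on $C_n$ then follows from the isogeny
\[
J(C_n) \;\sim\; \prod_{k=1}^{n-1} J(B_k),
\]
which arises from the degree-$2$ maps $C_{n+1} \to C_n$, $(t,y) \mapsto (f(t), y)$, and $C_{n+1} \to B_n$, $(t,y) \mapsto (f(t), ty)$, induced by the substitution $x = t^2 - 2 = f(t)$.

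For Part 4, one combines an upper bound on $\rank J(B_n)(\mathbb{Q})$ (obtained by comparing the mod-$p$ reductions with the explicit generator from Part 3) with the Chabauty--Coleman method: provided $\rank J(B_n)(\mathbb{Q}) < g = 2^{n-1}$, integrating the annihilating $1$-form over each residue disk of $B_n(\mathbb{Q}_p)$ at a convenient $p$ of good reduction and running a Newton polygon argument shows each disk contains at most one rational point, pinning $B_n(\mathbb{Q})$ down to the four listed points; if the rank saturates for some $n$ one falls back on the Mordell--Weil sieve combined with \'etale covers coming from the $2$-isogenies of $J(B_n)$, as in the proof of Theorem~\ref{thm:SmallFourth}. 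The main obstacle is Part 2: although the CM structure of Part 1 determines the Frobenius eigenvalues up to roots of unity, pinning down the precise polynomial $t^{2^n} + p^{2^{n-1}}$ (with the correct sign) and verifying the sharp congruence conditions on $p$ will require careful tracking of the Galois action both through the cover $D_n \to B_n$ and through the fixed-field computation above.
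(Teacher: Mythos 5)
Your high-level strategy is the same as the paper's: pass to a cyclotomic cover of $B_n$ (you use $Y^2=W^{2^{n+2}}+1$; the paper uses the twist $\mathfrak{C}_{n+1}\colon y^2=x(x^{2^{n+1}}+1)$ with the quotient maps $\phi_\pm$), deduce CM on $J(B_n)$ from the $\mu_{2^{n+2}}$-action, use the CM to control Frobenius, and use reduction modulo primes to control torsion. Parts~1 and~3 of your sketch are essentially in agreement with the paper (the paper actually cites Carocca--Lange--Rodriguez for the identification of the CM field and its primitivity, and proves the gcd statement for torsion by an explicit Dirichlet argument rather than a ``Chebotarev-style'' one, but these are fillable details). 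There are, however, two genuine gaps.

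In Part~2, ``cyclotomic reciprocity shows that the Frobenius eigenvalues form a single Galois orbit of $2^n$-th roots of $-p^{2^{n-1}}$'' is not yet an argument. The delicate point is precisely that knowing $J(B_n)$ has CM by $\mathbb{Q}(\zeta+\zeta^d)$ does \emph{not} by itself pin down the Frobenius characteristic polynomial over $\mathbb{F}_p$; you must actually count points. The paper does this by a character-sum computation showing $\#\mathfrak{C}_{n+1}(\mathbb{F}_{p^m})=p^m+1$ for $m<2^n$ (using the existence of a non-square $2^{n+1}$-th root of unity in $\mathbb{F}_{p^m}$, which is where the congruence $p\equiv\pm3\bmod 8$ enters), and then by a separate, fairly involved bijection lemma between $B_n^+(\mathbb{F}_q)$ and $B_n^-(\mathbb{F}_q)$ (valid only for $n\geq v_2(p+1)$, which is exactly where the exclusion of $n=1$ for $p\equiv3\bmod 8$ comes from), so that $\chi(\mathfrak{C}_{n+1},t)=\chi(B_n,t)^2$ and the Hasse--Weil bound forces the middle coefficient to its extreme value. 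Your sketch has no substitute for either ingredient.

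In Part~4, proposing Chabauty--Coleman for each $n$ (or falling back on a sieve with $2$-isogeny covers) does not give a uniform-in-$n$ argument: the genus $2^{n-1}$ grows, the rank is only known to be $\geq 1$, and you would need a fresh computation on a new high-genus curve for every $n$. The crucial structural fact you are missing is that $\Res(x+2,\,f^n)=2$ for \emph{all} $n$ (because $-2$ is a fixed point of $f$), so every rational point of $B_n$ lifts to one of the four twisted covers
\begin{equation*}
D_n^{(d)}\colon\quad du^2=x+2,\qquad dv^2=f^n(x),\qquad d\in\{\pm1,\pm2\},
\end{equation*}
and the component $dv^2=f^n(x)$ maps down to the \emph{fixed} genus-$\leq2$ curves $dv^2=f^m(x)$ for $m\leq 2$ via $(x,v)\mapsto(f^{n-m}(x),\,d\,f^{n-m-1}(x)v)$. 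The whole computation thus reduces, independently of $n$, to rank-zero descent and one genus-$2$ Chabauty computation on four small curves. Without the resultant observation, the Chabauty/sieve route you describe is not feasible.
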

Note that we have determined the rational points on a family of curves whose Jacobians are of  positive rank and geometrically simple, a usually difficult task. The key point is that our polynomial $f=x^2-2$ is equipped with a rational cycle, and we will discuss generalizations of this construction to other polynomials with similar dynamical properties; see (\ref{fact}). Furthermore, as a corollary, we obtain the factorization (into simple factors) of the Jacobians of $C_n$ for a large class of quadratic polynomials.   

\begin{restatable}{cor}{half} \label{cor:half}If $f(x)=x^2+ax+b\equiv x^2-2\bmod{p}$\; for some $p\equiv\pm{3}\bmod8$, then the decomposition \[J(C_n)\sim J(B_1)\times J(B_2)\dots\times J(B_{n-1})\] is indecomposable over \;$\mathbb{Q}$. In particular, this decomposition is indecomposable for at least half of all polynomials $f_c(x)=x^2+c$.  
\end{restatable}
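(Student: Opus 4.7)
The plan is to reduce the corollary to showing that each factor $J(B_i)$ is $\mathbb{Q}$-simple: the dimensions $\dim J(B_i)=2^{i-1}$ are pairwise distinct, so the factors are automatically non-isogenous, and the decomposition $J(C_n)\sim J(B_1)\times\cdots\times J(B_{n-1})$ itself is obtained in Section~2 by iterating the double cover $C_{k+1}\to C_k,\ (x,y)\mapsto(f(x),y)$ and taking Pryms; I take it as given.

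To prove $\mathbb{Q}$-simplicity of $J(B_i)$, I would reduce modulo a prime $p\equiv\pm3\pmod 8$ at which $f\equiv x^2-2$. Since $f\equiv x^2-2\pmod p$, the special fibre of $B_i$ is the Chebychev curve studied in Theorem~\ref{thm:Chebychev}, and the same discriminant computation that establishes smoothness there establishes good reduction here. Theorem~\ref{thm:Chebychev}(2) then gives
\[
\chi(B_i,t)=t^{2^i}+p^{2^{i-1}}
\]
for all $i\geq 1$ when $p\equiv 5\pmod 8$, and for $i\geq 2$ when $p\equiv 3\pmod 8$; in the residual case $i=1$, $p\equiv 3\pmod 8$, one handles $J(B_1)$ separately as an elliptic curve (automatically simple).

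The key arithmetic input is that $t^{2^i}+p^{2^{i-1}}$ is irreducible over $\mathbb{Q}$. Writing $t^{2^i}-a$ with $a=-p^{2^{i-1}}$, the classical Capelli--Vahlen criterion requires only that $a\notin\mathbb{Q}^2$ (true, since $a<0$) and, when $4\mid 2^i$, that $a\notin-4\mathbb{Q}^4$ (which would force $p^{2^{i-1}}=4k^4$, impossible for an odd prime). With this irreducibility, any $\mathbb{Q}$-decomposition $J(B_i)\sim A\times B$ reduces to an $\mathbb{F}_p$-decomposition whose Frobenius polynomials multiply to $t^{2^i}+p^{2^{i-1}}$, forcing $A=0$ or $B=0$; thus $J(B_i)$ is $\mathbb{Q}$-simple. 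For the density statement, $f_c(x)=x^2+c$ satisfies the hypothesis whenever $c\equiv-2\pmod p$ for some $p\equiv\pm3\pmod 8$, and inclusion--exclusion over $p\in\{3,5,11\}$ already yields density $1-(2/3)(4/5)(10/11)=17/33>1/2$. The main obstacle, modest as it is, is the good-reduction check for the general quadratic $f(x)=x^2+ax+b$, since $B_i$ was originally defined only for $f=x^2+c$; but the substitution $x\mapsto x-a/2$ brings $f$ to monic centred form over any ring in which $2$ is invertible, so modulo any odd prime $p$ the problem collapses to the Chebychev case already analysed.
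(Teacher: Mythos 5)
Your proposal is correct and follows the route the paper implicitly intends: the corollary is stated immediately after Theorem~2.2 with no written proof, and the argument is exactly what you give — reduce each $J(B_i)$ modulo a prime $p\equiv\pm3\pmod 8$ where $f\equiv x^2-2$, invoke the Frobenius characteristic polynomial $t^{2^i}+p^{2^{i-1}}$ from Theorem~2.2, observe it is irreducible over $\mathbb{Q}$ (your Capelli check is fine, and the residual $i=1$, $p\equiv3$ case is an elliptic curve hence automatically simple), and conclude $\mathbb{Q}$-simplicity since a nontrivial isogeny decomposition over $\mathbb{Q}$ would force a nontrivial factorization of the Frobenius polynomial over $\mathbb{F}_p$. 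Your inclusion-exclusion over $p\in\{3,5,11\}$ giving density $17/33>1/2$, and your remark that the general $f=x^2+ax+b$ is reduced to the centred form $x^2+c$ over $\mathbb{Z}[1/2]$ by a translation (which preserves the congruence $c\equiv-2\pmod p$), correctly fill in the details the paper leaves to the reader.
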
  

We close with a discussion of certain Galois uniformity questions, analogous to those for preperiodic points of rational polynomials; see \cite{Poonen2} and \cite{Ingram}. For the sake of completeness, all Galois groups were computed with Sage \cite{Sage}, and the descent calculations were carried out with Magma \cite{Magma}. Moreover, $\Res(f,g)$ will denote the resultant of two polynomials $f$ and $g$ throughout.    

\section{2. Arithmetic Properties of Curves Defined by Iteration}{\label{AQI}}
\indent\indent Let $f=f_c(x)=x^2+c$. As mentioned in the introduction, the size of the Galois group of $f^n$ is encoded in the existence of certain rational points on the curve 
\begin{equation}{\label{C_n}}C_n: =\big\{(x,y)\;\vert \;y^2=f_c^n(x)\big\},
\end{equation} 
and its quadratic twists \big(for now, we bracket the discussion of this correspondence and take it up in section \ref{Classifying Galois}\big). In particular, to understand the Galois groups as we iterate $f$, we must study the arithmetic of these curves. As a first step, we analyze the torsion subgroups and simple factors of their Jacobians in this section.  

When studying the rational points on a curve of large genus, one often attempts to find a map to a curve of lower genus, where the typical arithmetic procedures are more easily carried out. Note that by iterating $f$ we obtain maps 
\begin{equation}{\label{C_n}} C_n\xrightarrow{f} C_{n-1}\xrightarrow{f} C_{n-2}\xrightarrow{f}\dots\xrightarrow{f} C_1.  
\end{equation} 

However, in order to completely decompose the Jacobians of $C_n$ (and perhaps compute their endomorphism rings), it would be better to find maps to curves whose Jacobians are simple. Fortunately, for $m<n$ we also have the coverings $\pi_m:C_n\rightarrow B_m$\; given by
\begin{equation}{\label{B_n}}
B_m:=y^2=(x-c)\cdot f_c^m(x)\;\;\;\text{and}\;\;\; \pi_m(x,y)=\Big(f_c^{n-m}(x),\;y\cdot f_c^{n-m-1}(x)\Big).
\end{equation}

\begin{remark}Similar maps and curves, $\pi_m$ and $B_m$, can be constructed for all quadratic polynomials $f=x^2+bx+c$ simply by completing the square; see \cite{Me}. \end{remark}
 From here we can deduce that the Jacobian of $C_n$ decomposes  as one might expect from our setup. For simplicity, we adopt the notation $J(C)$ for the Jacobian of any curve $C$ throughout. Furthermore, we write $A\sim B$ when $A$ and $B$ are isogenous abelian varieties.     
  \begin{proposition}{\label{prop: decomp}} Let $f(x)=f_c(x)=x^2+c$. If $f^n$ is a separable, then $C_n$ is nonsingular and 
 \begin{equation}{\label{decomp}} J(C_n)\sim J(B_1)\times J(B_2)\times\dots\times J(B_{n-1})
 \end{equation} 
 for all $n\geq2$. In particular, we have such a decomposition when $f$ is irreducible.  
\begin{proof} We will proceed by induction on $n$. If $n=2$, then both $C_2$ and $B_1$ have genus 1. Moreover, $\pi_1: C_2\rightarrow B_1$ is nonconstant ($f$ has degree two) and hence the induced map on Jacobians must be an isogeny. 

Now for the general case. We fix $n$ and let $f_m: C_n\rightarrow C_m$ be the map $(x,y)\rightarrow\big(f^{n-m}(x),\;y\big)$. The maps $f_{n-1}$ and $\pi_{n-1}$ induce maps \[\phi=(\pi_{n-1},f_{n-1}): C_n\rightarrow B_{n-1}\times C_{n-1}\;\;\;\text{and}\;\;\; \phi_*: J(C_n)\rightarrow J(B_{n-1})\times J(C_{n-1}).\] We show that $\phi$ induces an isomorphism $\phi^*$ on the space of regular differentials on $C_n$ and on $B_{n-1}\times C_{n-1}$, from which it follows that $\phi_*$ is an isogeny.

Note that $\genus(C_n)=2^{n-1}-1$ and $\genus(B_m)=2^{m-1}$. It follows that for $0\leq i\leq2^{n-2}-1$, the set $\big\{\frac{x^i}{y}dx\big\}$ is a basis of $\HO^0\big(B_{n-1},\Omega\big)$. Similarly, $\big\{\frac{x^j}{y}dx\big\}$ is a basis of $\HO^0\big(C_{n-1},\Omega\big)$, for $0\leq j\leq2^{n-2}-2$. One computes that
\begin{equation}{\label{d1}}
\pi_{n-1}^*\left(\frac{x^i}{y}dx\right)=\frac{f(x)^i}{x\cdot y}d\big(f(x)\big)=2\frac{f(x)^i}{y}dx,
\end{equation}      
and 
\begin{equation}{\label{d2}} f_{n-1}^*\left(\frac{x^j}{y}dx\right)=\frac{f(x)^j}{y}d\big(f(x)\big)=\frac{2\cdot f(x)^j\cdot x}{y}dx.
\end{equation} 
Since \[\dim_{\mathbb{Q}}\Big(\HO^0\big(C_n,\Omega\big)\Big)=2^{n-1}-1=2^{n-2}+2^{n-2}-1=\dim_{\mathbb{Q}}\Big(\HO^0\big(B_{n-1},\Omega\big)\Big)+\dim_{\mathbb{Q}}\Big(\HO^0\big(C_{n-1},\Omega\big)\Big),\] it suffices to show that $\phi^*$ is surjective, to infer that it is an isomorphism. To do this, it is enough to show that $\big\{\frac{x^i}{y}dx\big\}$ inside $\HO^0(C_{n}, \Omega)$ is in the span of the images of $\pi_{n-1}^*$ and $f_{n-1}^*$ for all ${0\leq i\leq2^{n-1}-2}$. To establish this, we again precede by induction. 

Note that by (\ref{d1}) and (\ref{d2}), we have that 
\[\frac{dx}{y}=\frac{1}{2}\cdot\frac{2dx}{y}=\frac{1}{2}\cdot \pi_{n-1}^*\left(\frac{dx}{y}\right),\;\;\;\text{and}\;\;\;\; \frac{xdx}{y}=\frac{1}{2}\cdot\frac{2x\cdot dx}{y}=\frac{1}{2}\cdot f_{n-1}^*\left(\frac{dx}{y}\right).\]
As for the inductive step, suppose that $\frac{x^{i}dx}{y}$ is in the span of $\pi_{n-1}^*$ and $f_{n-1}^*$ for all $i\leq t-1$. Furthermore, assume that $t$ is even and write $t=2k$. If we write $f(x)^k=x^t+\sum_{i=0}^{t-1}c_ix^i$, then \[\frac{x^tdx}{y}=\frac{1}{2}\cdot\pi_{n-1}^*\left(\frac{x^kdx}{y}\right)-\sum_{i=0}^{t-1}c_i\frac{x^idx}{y}.\] However, the tail sum is in the span of $\pi_{n-1}^*$ and $f_{n-1}^*$ by the induction hypothesis (note that $t$ is even and $t\leq2^{n-1}-2$ implies that $k\leq2^{n-2}-1$, which is needed to force $\frac{x^kdx}{y}\in\HO^0\big(B_{n-1},\Omega\big)$ as desired). 

Similarly, if $t=2k+1$ is odd, then write $f(x)^{k}= x^{2k}+\sum_{i=0}^{t-2}s_ix^i$. We see that 
\[ \frac{x^tdx}{y}=\frac{x\cdot f(x)^kdx}{y} -\sum_{i=0}^{t-2}s_i\frac{x^{i+1}dx}{y}= \frac{1}{2}f_{n-1}^*\left(\frac{x^kdx}{y}\right)-\sum_{i=0}^{t-2}s_i\frac{x^{i+1}dx}{y}.\]  
Again, the tail sum is in the intended span. To see this, note that $t\leq2^{n-1}-2$ implies that $k\leq2^{n-2}-3$, and hence $\frac{x^kdx}{y}$ is in $\HO^0(C_{n-1}, \Omega)$. 

The argument above establishes that $J(C_n)\sim J(B_{n-1})\times J(C_{n-1})$. However, by induction we deduce that \[J(C_n)\sim J(B_{n-1})\times J(B_{n-2})\times\dots \times J(B_1)\] as claimed.     
\end{proof}  
\end{proposition}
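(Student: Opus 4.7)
The plan is to prove the isogeny by induction on $n$, at each stage breaking off the last factor $J(B_{n-1})$ via the pair of natural maps $\pi_{n-1}: C_n \to B_{n-1}$ (as defined in \eqref{B_n}) and $f_{n-1}: C_n \to C_{n-1}$ given by $(x,y)\mapsto (f(x),y)$. Together they give a morphism $\phi=(\pi_{n-1},f_{n-1}): C_n \to B_{n-1}\times C_{n-1}$, and the goal is to prove that the induced map $\phi_*: J(C_n)\to J(B_{n-1})\times J(C_{n-1})$ is an isogeny. Once this is established, induction on $n$ immediately yields the full decomposition, with the base case $n=2$ following from the fact that both $C_2$ and $B_1$ have genus one and $\pi_1$ is nonconstant.

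For the reduction step, I would first check that the dimensions match. Assuming $f^n$ is separable, $C_n$ is nonsingular (the discriminant of $y^2 - f^n(x)$ is nonzero) and a standard genus computation for hyperelliptic curves gives $\genus(C_n)=2^{n-1}-1$ and $\genus(B_m)=2^{m-1}$, so $\genus(B_{n-1})+\genus(C_{n-1})=2^{n-2}+(2^{n-2}-1)=2^{n-1}-1=\genus(C_n)$. Since source and target of $\phi_*$ have the same dimension, $\phi_*$ is an isogeny if and only if $\phi^*$ is an isomorphism on regular differentials, and by dimension-counting again it suffices to show $\phi^*$ is surjective. Using the standard hyperelliptic bases $\{x^i dx/y\}$, I would compute the two pullback formulas
\[
\pi_{n-1}^*\!\left(\tfrac{x^i}{y}dx\right)=\tfrac{2f(x)^i}{y}dx,\qquad f_{n-1}^*\!\left(\tfrac{x^j}{y}dx\right)=\tfrac{2x\,f(x)^j}{y}dx,
\]
and then argue that every differential $x^k dx/y$ on $C_n$ (with $0\leq k\leq 2^{n-1}-2$) lies in the span of these pullbacks by a secondary induction on $k$. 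If $k=2\ell$ is even, then $\pi_{n-1}^*(x^\ell dx/y)$ equals $2\,x^{2\ell}dx/y$ plus lower-order terms (which are handled by the induction hypothesis); if $k=2\ell+1$ is odd, then $f_{n-1}^*(x^\ell dx/y)$ plays the analogous role.

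The main bookkeeping obstacle, and the step I expect to require the most care, will be verifying that the indices $\ell$ arising in the inductive step actually lie in the allowed ranges $0\le \ell\le 2^{n-2}-1$ for $B_{n-1}$ and $0\le \ell\le 2^{n-2}-2$ for $C_{n-1}$, so that the differentials being pulled back genuinely belong to $\HO^0(B_{n-1},\Omega)$ and $\HO^0(C_{n-1},\Omega)$ respectively. Once this parity-based range check is confirmed for both the even and odd cases, surjectivity of $\phi^*$ follows, $\phi_*$ is an isogeny, and induction on $n$ completes the proof of the stated decomposition.
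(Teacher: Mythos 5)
Your proposal follows essentially the same route as the paper: induct on $n$ using the pair $(\pi_{n-1},f_{n-1})$, match genera so that an isogeny is equivalent to surjectivity of $\phi^*$ on regular differentials, compute the two pullback formulas, and run a secondary induction on the exponent (splitting into even/odd cases with $\pi_{n-1}^*$ and $f_{n-1}^*$ respectively). The range checks you flag as the main bookkeeping obstacle are precisely the ones the paper carries out, and they do go through, so the sketch is sound.
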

In  Corollary \ref{cor:half}, we show that for many values of $c$ and every $m$, the Jacobian $J(B_m)$ of $B_m$ is simple. To do this, we extract information from the special case when $c=-2$. In this situation, $f=x^2-2$ is a Chebyshev polynomial of degree $2$, often denoted $T_2$ elsewhere in the literature \cite[1.6]{SilvDyn}. More generally, $f^n$ is the Chebyshev polynomial $T_{2^n}$ of degree $2^n$.

We consider the Chebyshev polynomials $T_d$ as characterized by the equations 
\begin{equation}{\label{Cheb}} 
T_d(z+z^{-1})=z^d+z^{-d}\;\;\; \text{for all}\;\; z\in\mathbb{C}^*,
\end{equation} 
and $T_d$ is known to be a degree $d$ monic polynomial with integer coefficients. The classical Chebyshev polynomials $\tilde{T}_d$ were defined in the following way: \[\text{If we write}\;\; z=e^{it},\;\;\text{then}\;\;\; \tilde{T}_d(2\cos(t))=2\cos(dt),\]  though we use the first characterization in (\ref{Cheb}), where $T_d$ is monic. For a complete discussion of these polynomials, see \cite[1.6]{SilvDyn}. 

It has long been known that the dynamical behavior of the Chebychev polynomials is particularly simple. We will harness this simplicity to deduce strong conclusions about the curves $B_n$ in this case. However, the key insight is that a polynomial does not have to actually be a Chebychev polynomial for parts of this analysis to work, but simply reduce to $x^2-2$ modulo some prime $p\equiv\pm{3}\bmod{8}$. 

In particular, we obtain arithmetic information for a large class of quadratic polynomials. Our first result in the Chebychev case is the following. 

\begin{theorem}{\label{over finite}} Let $f(x)=x^2-2$ and consider the curves $B_n/\;\mathbb{F}_p$. 
\begin{enumerate} 
\item If $p\equiv5\bmod{8}$, then $B_n$ has characteristic polynomial $\chi(B_n,t)=t^{2^n}+p^{2^{n-1}}$ for all $n\geq1$. 
\item If $p\equiv3\bmod{8}$, then $B_n$ has characteristic polynomial $\chi(B_n,t)=t^{2^n}+p^{2^{n-1}}$ for all $n\geq2$. 
\end{enumerate} 
In particular, $J(B_n)(\mathbb{F}_p)\cong\mathbb{Z}/(p^{2^{n-1}}+1)\mathbb{Z}$\; and $J(B_n)$ is supersingular for the $n$ and $p$ given above.   

\begin{proof} To compute $\chi(B_n,t)$, we consider the auxiliary curves 
\begin{equation}{\label{aux curves}} \mathfrak{C}_n:y^2=x\cdot (x^{2^n}+1)\;\;\text{and}\;\;\; B_n^{\pm}:y^2=(x\pm2)\cdot T_{2^n}(x).
\end{equation} 
 Note that $\mathfrak{C}_{n+1}$ is equipped with the maps 
\begin{equation}{\label{aux maps}}
\phi_{\pm}:\mathfrak{C}_{n+1}\rightarrow B_n^{\pm}, \;\; \phi_{\pm}(x,y)=\left( x+\frac{1}{x}\;,\;\frac{(x\pm1)\cdot y}{x^{2^{n-1}+1}}\;\right),
\end{equation} 
and that $B_n^{\pm}/\,\mathbb{F}_p$ are nonsingular for every odd prime. The nonsingularity follows from the fact that $T_2=f$ is critically finite: $\{f(0),f^2(0),f^3(0),\dots\}=\{\pm2\}$. Hence the discriminant of $(x\pm2)\cdot f^n(x)$ is a power of $2$. In general, the discriminant $\Delta_m$ of $f_c^m$ for any quadratic polynomial $f_c=x^2+c$ is 
\begin{equation}{\label{discriminant}}
\Delta_m=\pm\Delta_{m-1}^2\cdot2^{2^m}\cdot f_c^m(0);
\end{equation} 
 see \cite[Lemma 2.6]{Jones2}. Before proceeding with the proof of Theorem \ref{over finite}, we are in need of a few lemmata. 
\begin{lemma}{\label{basic}} Let $n$ be any positive integer. If $m<2^n$ and $p\equiv\pm{3}\bmod{8}$, then the field $\mathbb{F}_{p^m}$ contains an element $\alpha$ satisfying $\alpha^{2^{n+1}}=1$, which is not a square in $\mathbb{F}_{p^m}$. 
\begin{proof} Write $q=p^m$ and suppose that $m=2^t$. Notice that $\mathbb{F}_q$ contains an element $\alpha$ of order $2^{t+2}$, since $p^{2^t}-1$ is divisible by $2^{t+2}$ and $\mathbb{F}_q^*$ is cyclic. To see this, write \[(p^{2^t}-1)=(p^{2^{t-1}}-1)\cdot(p^{2^{t-1}}+1)=(p-1)\cdot(p+1)\cdot(p^2+1)\dots\cdot(p^{2^{t-1}}+1),\] inductively. As either $p+1$ or $p-1$ is $\equiv 0\bmod{4}$ and every other term in the product is even, we see that $p^{2^t}-1$ is divisible by $2^{t+2}$. 

However, $p\equiv\pm{3}\bmod{8}$ implies that no higher power of $2$ can divide the product. Hence $\alpha$ is not a square in $\mathbb{F}_q$. Finally, the conditions on $m$ force $t\leq n-1$. Therefore, $\alpha^{2^{n+1}}=1$ as desired.      

In general we may write $m=2^t\cdot a$ for some odd $a$. By applying the result in the $2$-powered case to the subfield $\mathbb{F}_{p^{2^t}}\subset\mathbb{F}_q$, we may find an element $\alpha\in\mathbb{F}_{p^{2^t}}$ with the desired properties. Note that if such an element $\alpha$ were a square in $\mathbb{F}_q$, then it must be a square in $\mathbb{F}_{p^{2^t}}$ (otherwise there would be a proper quadratic extension of $\mathbb{F}_{p^{2^t}}$ contained in $\mathbb{F}_q$, contradicting the fact that $a$ is odd). However, as was the case above, the fact that $p\equiv\pm{3}\mod{8}$ implies that $\alpha$ is not a square in $\mathbb{F}_{p^{2^t}}$.   
\end{proof}
\end{lemma}
\begin{lemma}{\label{bijection}} If $v_2(p+1)=k$, then $\#B_n^{+}(\mathbb{F}_q)=\#B_n^{-}(\mathbb{F}_q)$ for all $n\geq k$ and all $q=p^t$.  
\begin{proof} Notice that when $q=p^{2t}$ or $p\equiv1\bmod{4}$, the claim easily follows: choose $\alpha$ in $\mathbb{F}_q$ such that $\alpha^2=-1$. Then $(x,y)\rightarrow(-x, \alpha y)$ is a bijection from $B_n^+(\mathbb{F}_q)$ to $B_n^-(\mathbb{F}_q)$. 

When $q=p^{2t+1}$ we define the bijections 
\begin{equation}{\label{pi's}}
\pi_{+}: B_n^{+}(\mathbb{F}_q)\rightarrow\ B_n^{-}(\mathbb{F}_q)\;\; \text{and}\;\; \pi_{-}: B_n^{-}(\mathbb{F}_q)\rightarrow\ B_n^{+}(\mathbb{F}_q)
\end{equation} 
 using the following strategy: for $\pi_+$ we take $(a,b)\in B_n^+(\mathbb{F}_q)$ and pullback $\phi_+$ defined in (\ref{aux maps}) to a point $(w,y)\in\mathfrak{C}_n(\mathbb{F}_{q^2})$. Next, apply the endomorphism $(w,y)\rightarrow(\zeta^2w,\zeta y)$ on $\mathfrak{C}_n(\mathbb{F}_{q^2})$ for a suitably chosen root of unity $\zeta$. Finally, apply $\phi_-$ to get a point on $B_n^-(\mathbb{F}_q)$. Define $\pi_-$ in a similar fashion.    

Specifically, let $a\in\mathbb{F}_q$ and write $a=w+w^{-1}$ for some $w\in\mathbb{F}_{q^2}^*$. Moreover, since $v_2(p+1)=k$, we may choose a primitive $2^{k+1}$-th root of unity $\zeta\in\mathbb{F}_{p^2}\subseteq\mathbb{F}_{q^2}$. Define the maps $\pi_+$ and $\pi_-$ as follows:    
\begin{displaymath}
   \pi_+(a,b) := \left\{
     \begin{array}{llrr}
       (2,0) & : \;(a,b)=(-2,0)\\
       \\
       \Big(a,\frac{(w-1)}{w+1}\cdot b\Big)& : \; a\neq-2, \;a^2-4\in\mathbb{F}_q^2\\
       \\
       \left(\zeta^2w+(\zeta^2w)^{-1}, \frac{\zeta^2w-1}{w+1}\cdot\frac{-1}{\zeta}\cdot b\right) & : \; a\neq-2,\; a^2-4\notin\mathbb{F}_q^2\\ 
       \\
       \infty & : \;(a,b)=\infty
     \end{array}
   \right.
\end{displaymath} and, 
\begin{displaymath}
   \pi_-(a,b) := \left\{
     \begin{array}{llrr}
       (-2,0) & : \;(a,b)=(2,0)\\
       \\
       \left(a,\frac{(w+1)}{w-1}\cdot b\right)& : \; a\neq-2, \;a^2-4\in\mathbb{F}_q^2\\
       \\
       \left(\zeta^2w+(\zeta^2w)^{-1}, \frac{\zeta^2w-1}{w+1}\cdot-\zeta\cdot b\right) & : \; a\neq-2,\; a^2-4\notin\mathbb{F}_q^2\\ 
       \\
       \infty & : \;(a,b)=\infty
     \end{array}
   \right.
\end{displaymath}     
One easily checks that $\pi_+$ and $\pi_-$ are inverses, that $\pi_+\big(B_n^+(\mathbb{F}_q)\big)\subset B_n^-(\mathbb{F}_{q^2})$, and that $\pi_-\big(B_n^-(\mathbb{F}_q)\big)\subseteq B_n^+(\mathbb{F}_{q^2})$. The content which remains to be checked is that the respective images of $\pi_+$ and $\pi_-$ are in fact defined over $\mathbb{F}_q$. We complete the argument for $\pi_+$ only, as the argument for $\pi_-$ is identical.   

Suppose that $a^2-4\in\mathbb{F}_q^2$ and that $(a,b)\in B_n^+(\mathbb{F}_q)$. Since $w+w^{-1}=a$ (or equivalently $w^2-aw+1=0$), the quadratic formula tells us that $w\in\mathbb{F}_q$. Hence $\frac{w+1}{w-1}\cdot b\in\mathbb{F}_q$ and $\pi_+(a,b)\in B_n^-(\mathbb{F}_q)$ as claimed. 

On the other hand, if $a^2-4\notin\mathbb{F}_q^2$, then the frobenius map $x\rightarrow x^q$ acts nontrivially on $w$, and must send it to $w^{-1}$, the only other root of $x^2-ax+1=0$. Note that \[(\zeta^2)^q=(\zeta^2)^{(p^{2t+1})}=((\zeta^2)^{p^{2t}})^p=(\zeta^2)^p=\zeta^{-2},\] since $\zeta^2\in\mathbb{F}_{p^2}$ (hence fixed by applying $x\rightarrow x^p$ an even number of times) and $2\cdot (p+1)\equiv0\bmod{2^{k+1}}$. It follows that $(\zeta^2w)^q=(\zeta^2w)^{-1}$ and $(\zeta^2w)+(\zeta^2w)^{-1}\in\mathbb{F}_q$.   

For the second coordinate, we compute that \[ \left(\frac{\zeta^2w-1}{w+1}\cdot\frac{-1}{\zeta}\cdot b\right)^q=\frac{\frac{1}{\zeta^2w}-1}{\frac{1}{w}+1}\cdot\frac{-1^q}{\zeta^q}\cdot b^q=\frac{1}{\zeta^{q+2}}\cdot\frac{\zeta^2w-1}{w+1}\cdot b\;.\] It suffices to show that $\zeta^{q+1}=-1$. To see this, write $q=p^{2t+1}$ and use the fact that $(\zeta)^{p^{2t}}=\zeta$, since $\zeta\in\mathbb{F}_{q^2}$. In particular, we compute that $\zeta^{q+1}=\zeta(\zeta^{p^{2t}})^p=(\zeta)^{p+1}=-1$ as desired. Hence $\pi_+(a,b)\in B_n^-(\mathbb{F}_q)$.       
\end{proof}
\end{lemma}
Now for the proof of Theorem \ref{over finite}. One checks that $\phi_{\pm}$ is a quotient map for the involution \[\psi_{\pm}(x,y)=\left(\frac{1}{x}\;,\frac{\pm y}{x^{2^n+1}}\right),\] of $\mathfrak{C}_{n+1}$ defined in (\ref{aux curves}). If $G=\Aut(\mathfrak{C}_{n+1})$, then a result of Kani and Rosen \cite{Decomp} on the equivalence of idempotents in the group algebra $\mathbb{Q}[G]$ implies that the Jacobian of the curve $\mathfrak{C}_{n+1}$ has a decomposition: 
\begin{equation}{\label{Rosen Decomp}} 
J(\mathfrak{C}_{n+1})\sim J(B_n^+)\times J(B_n^-)
\end{equation} 
over the rationals. Moreover, since every odd prime is a prime of good reduction, we have a similar decomposition of $J(\mathfrak{C}_n)$ over $\mathbb{F}_p$. 

Therefore, it suffices to compute the characteristic polynomial of Frobenius for $\mathfrak{C}_{n+1}$ to find that of $B_n$. In keep with our earlier notation, we denote this characteristic polynomial by $\chi(\mathfrak{C}_{n+1},t)$. We refer the reader to \cite[\S14.1]{Coh-Finite} for the formulas relating the coefficients of this polynomial to the number of points on the curve over $\mathbb{F}_q$. 

Note that the genus of $\mathfrak{C}_{n+1}$ is $g=2^n$, and so to compute the coefficients of the Euler polynomial of $\mathfrak{C}_{n+1}$, we need to find \[N_m=\#\mathfrak{C}_{n+1}(\mathbb{F}_q)=q+1+\sum_{x\in\mathbb{F}_{q}}  \left(\frac{x}{q}\right) \left(\frac{x^{2^{n+1}}+1}{q}\right),\] for $q=p^m$ and $m\leq2^{n}$. To do this, suppose that $m<2^{n}$ and apply Lemma \ref{basic} to find a non-square $\alpha\in\mathbb{F}_{p^m}$ satisfying $\alpha^{2^{n+1}}=1$. We compute that
\begin{small}
\begin{equation}{\label{character sum}} 
N_m-\;(q+1)=\sum_{x\in S}\sum_{i=1}^{2^{n+1}}\left(\frac{x\cdot \alpha^i}{q}\right)\left(\frac{(\alpha^i\cdot x)^{2^{n+1}}+1}{q}\right)=\sum_{x\in S}\left(\sum_{i=1}^{2^{n+1}}\left(\frac{\alpha^i}{q}\right)\right) \left(\frac{x}{q}\right) \left(\frac{x^{2^{n+1}}+1}{q}\right)=0,
\end{equation} 
\end{small} 
where $S$ is a set of coset representatives for $\mathbb{F}_q^*/\langle\alpha\rangle$; the final equality follows from the fact that $\left(\frac{\alpha}{p}\right)=-1$.

 It is known that $\chi$ can be expressed as \[\chi(t)=t^{2g}+a_1t^{2g-1}+\dots+a_g+pa_{g-1}+\dots+p^g,\] where for $i\leq g$, one has that 
 \begin{equation}{\label{coefficients}}
 ia_i=(N_i-p^i-1)+(N_{i-1}-p^{i-1}-1)a_i+\dots +(N_1-p-1)a_{i-1}.
\end{equation}  
  This recurrence relation for the coefficients $a_i$ follows from Netwon's Formula expressing the elementary power polynomials in terms of the elementary symmetric functions; see \cite[p.619]{DF}. 
 
 It follows from the character sum on (\ref{character sum}) and the expression in (\ref{coefficients}) that 
\begin{equation}{\label{characteristic}} \chi(\mathfrak{C}_n,t)=t^{2^{n+1}}+at^{2^n}+p^{2^n}\;\;\text{and}\;\; 2^n\cdot a=N_{2^n}-p^{2^n}-1.
\end{equation}  
Note that the Hasse-Weil bound implies that $a\leq2p^{2^{n-1}}$. We show that this is an equality.

From Lemma \ref{bijection}, it follows that $J(\mathfrak{C}_{n+1})\sim J(B_n^{+})^2$ and $\chi(\mathfrak{C}_n,t)=\chi(B_n^+,t)^2$ for the $n$ designated in Theorem \ref{over finite}. If we write \[a= 2p^{2^{n-1}}+2b_1^2\cdot p^{2^{n-1}-1}+\dots +b_{2^{n-2}}^2,\] where the $b_i$'s are the coefficients of the characteristic polynomial of $J(B_n)$, then the bound on $a$ implies that \[2b_1^2\cdot p^{2^{n-1}-1}+\dots +b_{2^{n-2}}^2=0.\] We conclude that $b_i=0$ for all $i$, and that $\chi(B_n,t)=t^{2^{n}}+p^{2^{n-1}}$ as claimed. The group structure of $J(B_n)(\mathbb{F}_p)$ follows from a theorem of Zhu, which may be found in \cite[\S45]{CurvesFinite}  
\end{proof} 
\end{theorem}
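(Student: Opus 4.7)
The driving observation is that $f = x^2 - 2$ is the Chebyshev polynomial $T_2$, so $f^n = T_{2^n}$ is controlled by the substitution $x = z + z^{-1}$, which turns its roots into $2^{n+1}$-th roots of unity. My plan is therefore to lift $B_n$ to a simpler auxiliary curve whose points can be counted by a direct character sum, and then descend the resulting characteristic polynomial through an isogeny decomposition of the Jacobians.

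Specifically, I would introduce the curve $\mathfrak{C}_{n+1} : y^2 = x(x^{2^{n+1}} + 1)$ and the twist pair $B_n^\pm : y^2 = (x \pm 2)\,T_{2^n}(x)$, noting that $B_n = B_n^+$. The Chebyshev substitution gives explicit degree-two coverings $\phi_\pm : \mathfrak{C}_{n+1} \to B_n^\pm$ as quotients by commuting involutions of $\mathfrak{C}_{n+1}$. Invoking Kani--Rosen on idempotents in $\mathbb{Q}[\Aut(\mathfrak{C}_{n+1})]$ then yields an isogeny
\[
J(\mathfrak{C}_{n+1}) \sim J(B_n^+) \times J(B_n^-)
\]
over $\mathbb{Q}$, and because $T_2$ is critically finite the discriminant of $(x \pm 2)\,T_{2^n}(x)$ is a power of $2$, so the same decomposition persists over every $\mathbb{F}_p$ with $p$ odd.

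Next I would count $\#\mathfrak{C}_{n+1}(\mathbb{F}_{p^m})$ for $m \le 2^n$ by a character-sum argument. The crucial number-theoretic input is the existence, when $p \equiv \pm 3 \pmod 8$ and $m < 2^n$, of a non-square $2^{n+1}$-th root of unity $\alpha \in \mathbb{F}_{p^m}$; this relies on computing $v_2(p^{2^t} - 1) = t + 3$ exactly, which follows from $p \equiv \pm 3 \pmod 8$. Grouping $\mathbb{F}_{p^m}^*$ into $\langle \alpha \rangle$-cosets and using bilinearity of the Legendre symbol forces the resulting character sum to vanish, so $\#\mathfrak{C}_{n+1}(\mathbb{F}_{p^m}) = p^m + 1$ for every $m < 2^n$. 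Newton's identities then collapse $\chi(\mathfrak{C}_{n+1}, t)$ to a two-term shape $t^{2^{n+1}} + a\, t^{2^n} + p^{2^n}$ with a single unknown $a$.

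To finish, I would construct an explicit $\mathbb{F}_q$-bijection $B_n^+(\mathbb{F}_q) \leftrightarrow B_n^-(\mathbb{F}_q)$ once $n \ge v_2(p+1)$, by pulling back along $\phi_+$, twisting by a suitable $2^{k+1}$-th root of unity $\zeta$, and pushing forward along $\phi_-$. This upgrades the Jacobian decomposition to $\chi(\mathfrak{C}_{n+1}, t) = \chi(B_n, t)^2$, after which expanding the square and comparing with the Hasse--Weil bound $|a| \le 2 p^{2^{n-1}}$ saturates the bound and forces every intermediate coefficient of $\chi(B_n, t)$ to vanish, so $\chi(B_n, t) = t^{2^n} + p^{2^{n-1}}$. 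Supersingularity and the cyclic structure $J(B_n)(\mathbb{F}_p) \cong \mathbb{Z}/(p^{2^{n-1}}+1)\mathbb{Z}$ then come out of evaluating $\chi$ at $t = 1$ together with Zhu's theorem. I expect the main obstacle to be the non-square-root-of-unity lemma when $m$ is not a power of $2$: one must descend to the $2$-Sylow subfield, produce $\alpha$ there, and then verify that squareness is preserved under the remaining odd-degree extension — precisely what the hypothesis $p \equiv \pm 3 \pmod 8$ is paying for.
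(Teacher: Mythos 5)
Your proposal follows the paper's argument essentially step for step: the auxiliary curve $\mathfrak{C}_{n+1}:y^2=x(x^{2^{n+1}}+1)$, the coverings $\phi_\pm$ onto $B_n^\pm$, Kani--Rosen to split $J(\mathfrak{C}_{n+1})\sim J(B_n^+)\times J(B_n^-)$, the root-of-unity character sum to kill $N_m-(p^m+1)$ for $m<2^n$, Newton's identities collapsing $\chi(\mathfrak{C}_{n+1},t)$ to $t^{2^{n+1}}+at^{2^n}+p^{2^n}$, the explicit $\mathbb{F}_q$-bijection between $B_n^+$ and $B_n^-$ for $n\geq v_2(p+1)$, and finally Hasse--Weil saturation plus Zhu's theorem. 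This is the same proof.

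One numerical slip is worth flagging, because if taken literally it would break the argument exactly at the boundary exponent. You assert $v_2(p^{2^t}-1)=t+3$; the correct value is $t+2$ (for $t\geq 1$, and also for $t=0$ when $p\equiv 5\bmod 8$), as one sees from the telescoping factorization $p^{2^t}-1=(p-1)(p+1)(p^2+1)\cdots(p^{2^{t-1}}+1)$: the condition $p\equiv\pm 3\bmod 8$ puts exactly a $2^2$ on one of the first two factors, a single $2$ on the other, and a single $2$ on each $p^{2^i}+1$ with $i\geq 1$. The distinction matters: with $t+2$, a generator $\alpha$ of the $2$-Sylow of $\mathbb{F}_{p^{2^t}}^*$ has order $2^{t+2}\leq 2^{n+1}$ precisely when $t\leq n-1$, i.e.\ for all $m=2^t<2^n$, which is exactly what the character sum requires. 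If the valuation were $t+3$, the non-square $\alpha$ would have order $2^{n+2}$ at the critical case $m=2^{n-1}$, so it would not satisfy $\alpha^{2^{n+1}}=1$, and the coset-averaging step would fail at that $m$.
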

As mentioned in the introduction, we can extract global information for a large class of quadratic polynomials from the local data in the Chebychev case. We restate this information here.    
\half*
\begin{remark} We can also use Theorem \ref{over finite} to extract global torsion data. For instance, let $f(x)=x^2+63$. Then one can prove that $J(B_n)(\mathbb{Q})_{\text{Tor}}\cong\mathbb{Z}/2\mathbb{Z}$ for all $n\leq30$. This follows from Theorem \ref{over finite} and the fact that $\gcd(5^{2^n}+1,13^{2^n}+1)=2$ for all $n\leq30$.    
\end{remark} 
In the Chebychev case, the positive density of primes at which $J(B_n)$ has supersingular reduction suggests the presence of latent symmetries. Indeed, one sees that $J(B_1)$ is an elliptic curve which has complex multiplication by $\mathbb{Z}[\sqrt{-2}]$: \[ [\sqrt{-2}](x,y)=\left(-1/2\cdot \frac{x^2-2}{x-2}+2\;,\; \frac{1}{-2\sqrt{-2}}\cdot\frac{y\cdot((x-2)^2-2)}{(x-2)^2}\right) .\]  

Moreover, by checking Igusa invariants against known examples, one sees that $J(B_2)$ also has complex multiplication by $\mathbb{Q}(\sqrt{\sqrt{2}-2})$. Statement $1$ of Theorem \ref{thm:Chebychev}, which follows from the work of Carocca, Lange, and Rodriguez \cite{CM}, shows that these examples are no accident and provides a construction of hyperelliptic curves, defined over the rational numbers, which have complex multiplication. 

There is much more we can prove in the Chebychev case, especially about statements pertaining to rational points. In fact, the technique which we use to determine $B_n(\mathbb{Q})$ for all $n\geq2$ generalizes to polynomials possessing a $\Gal(\bar{\mathbb{Q}}/\mathbb{Q})$-stable cycle, which we discuss after restating and proving the following consolidated theorem.
   
\Chebychev*
\begin{proof} For the first statement, note that $\mathfrak{C}_{n+1}$ defined in (\ref{aux curves}) has complex multiplication by $\mathbb{Q}(\zeta)$, induced by the map \[[\zeta]:\mathfrak{C}_{n+1}\rightarrow \mathfrak{C}_{n+1},\;\;\; [\zeta](x,y)=(\zeta^2x,\zeta y).\] We have already seen that the quotient curve of $\mathfrak{C}_{n+1}$ by the automorphism $\psi$ is $B_n$ and that $J(\mathfrak{C}_n)\sim J(B_n)^2$ over $\bar{\mathbb{Q}}$. It follows that the simple factors of $J(B_n)$ have complex multiplication by some subfield of $\mathbb{Q}(\zeta)$; see \cite[Theorem 3.3]{Lang}.  

In \cite[Theorem 2]{CM} it was shown that ${\mathfrak{C}_{n+1}'}:y^2=x(x^{2^{n+1}}-1)$ has a quotient $X$ with the property that $J(X)$ has complex multiplication by $\mathbb{Q}(\zeta+\zeta^d)$. Moreover, since $\mathbb{Q}(\zeta+\zeta^d)$ does not contain any proper CM fields, $J(X)$ must be absolutely simple; see \cite[Theorem 3.3]{Lang}. 

However, note that $\mathfrak{C}_{n+1}$ and $\mathfrak{C}_{n+1}'$ are twists, becoming isomorphic over $\mathbb{Q}(\zeta)$. Since any decomposition of an abelian variety is unique up to isogeny, we must have that $J(X)\sim J(B_n)$. Hence, $\End_0(J(X))\cong\End_0(J(B_n))$ and $J(B_n)$ has CM as claimed.

\begin{remark} \cite[Theorem 2]{CM} was established by studying the general case of metacyclic Galois coverings $Y\rightarrow\mathbb{P}^1$ branched at $3$-points, building upon previous work of Ellenberg. To translate, the relevant Galois covering group is 
\begin{equation}{\label{Galois covering}}
G=\Big\langle\,\,[\zeta],\; \psi\; \Big\vert \;\;\; [\zeta]^{2^{n+1}}=\psi^2=1,\; \psi\circ[\zeta]\circ\psi=[\zeta^d]\;\Big\rangle,
\end{equation} 
 and one can take $Y$ to be $\mathfrak{C}_{n+1}'$.   
\end{remark}   
 
The second statement of Theorem \ref{thm:Chebychev} is a restatement of Theorem \ref{over finite}. For the third statement, we use the fact that $J(B_n)(\mathbb{Q})_{\text{Tor}}$ injects into $J(B_n)(\mathbb{F}_p)$ via the reduction map \cite{Katz}. Hence, $J(B_n)(\mathbb{Q})_{\text{Tor}}\cong\mathbb{Z}/2\mathbb{Z}$ follows from the fact that \[\gcd\big(5^{2^n}+1,13^{2^n}+1,29^{2^n}+1,\dots\big)=2\;\;\text{for all}\;\;n,\] where the above set ranges over all (or almost all) primes $p\equiv5\bmod{8}$. To see this, fix $n$ and suppose that $p^*$ is an odd prime which divides $p^{2^n}+1$ for almost all $p\equiv5\bmod{8}$. In particular $p^{2^n}\equiv-1\bmod{p^*}$, and hence $p^*\equiv1\bmod{4}$. 

Now note that the $\gcd(4p^*+1, 8p^*)=1$, so that Dirichlet's theorem on arithmetic progressions implies that there exist infinitely many primes $p_0$ with $p_0=4p^*+1+8p^*k_0$ for some integer $k_0$. Moreover, since $4p^*+1\equiv5\bmod{8}$, we have that $p_0\equiv5\bmod{8}$. Hence, we may choose $p_0$ such that $p_0^{2^n}+1\equiv0\bmod{p^*}$ by our assumption on $p^*$.  

Finally, one sees that $1\equiv4p^*+1+8p^*k_0\equiv p_0\bmod{p^*}$ and $2\equiv p_0^{2^n}+1\equiv0\bmod{p^*}$. This is a contradiction since $p^*$ is odd.

On the other hand $f^n(0)=2$ for all $n\geq2$, from which it follows that $(0,2)\in B_n(\mathbb{Q})$. Moreover, by the argument above, this point (after embedding it into in the Jacobian) is not a torsion point. Hence, $J(B_n)(\mathbb{Q})$ has positive rank. The statement regarding the rank of the rational points of $J(C_n)$ follows from Proposition \ref{prop: decomp}.   
 
Finally, we prove statement $4$ of Thereom \ref{thm:Chebychev}. If $n\geq2$, then $C_n$ maps to $B_1:y^2=(x+2)(x^2-2)$. However, $B_1$ is an elliptic curve, and a $2$-descent shows that $B_1(\mathbb{Q})$ has rank zero. It follows that $B_1(\mathbb{Q})=\{\infty,(-2,0)\}$, and after computing preimages, we see that $C_n(\mathbb{Q})$ contains only the infinite points.

If $n=2$, a $2$-descent shows that the rank of $J(B_2)(\mathbb{Q})$ is one. Moreover, after running the Chabauty function in Magma \cite{Magma}, we see that $B_2(\mathbb{Q})=\{\infty,(-2,0),(0,\pm{2})\}$. This matches our claim for larger $n$. For the remaining $n\geq3$, we use covering collections to determine $B_n(\mathbb{Q})$. 

Since the resultant of $x+2$ and $f^n$ is equal to $2$ (for all $n$), the rational points on $B_n$ are covered by the rational points on the curves \[D_n^{(d)}:\;\; du^2=x+2,\;\;\;dv^2=f^n(x),\;\;\;\text{for}\; d\in\{\pm{1},\pm{2}\};\] see \cite[Example 9]{Stoll-R}. We will proceed by examining the second defining equation $C_n^{(d)}:dv^2=f^n(x)$ of $D_n^{(d)}$. If $d=1$, then our description of $C_n(\mathbb{Q})$ implies that $D_n(\mathbb{Q})$ has only the points at infinity. If $d=-2$, then $C_n^{(-2)}$ maps to the elliptic curve $B_1^{(-2)}:-2v^2=(x+2)\cdot(x^2-2)$ via $(x,y)\rightarrow(f^{n-1}(x),-2\cdot f^{n-2}(x)\cdot y)$. A descent shows that $B_1^{(-2)}(\mathbb{Q})$ has rank zero, from which it easily follows that $B_1^{(-2)}(\mathbb{Q})=\{\infty,(-2,0)\}$. By computing preimages, we find that $C_n^{(-2)}$ and $D_n^{(-2)}$ have no rational points. 

For the remaining cases when $d=-1$ and $d=2$, we map $C_n^{(d)}$ to $B_2^{(d)}$ via $(x,y)\rightarrow(f^{n-1}(x),d\cdot f^{n-2}(x)\cdot y)$. However, in either scenario, we find that $J(B_2^{(d)})(\mathbb{Q})$ has rank one, and moreover, we can compute a generator using bounds between the Weil and canonical heights. After running the Chabauty function in Magma \cite{Magma} and computing preimages, we find that 
\[C_n^{(2)}(\mathbb{Q})=\{(0,\pm{1}),(\pm{2},\pm{1})\}\;\; \text{and}\;\;C_n^{(-1)}(\mathbb{Q})=\{(\pm{1},\pm{1})\}\;\;  \text{for all}\;\;n\geq3.\]
Consequently, $D_n^{(2)}(\mathbb{Q})=\{(0,\pm{1},\pm{1}),(-2,0,\pm{1})\}$ and $D_n^{(-1)}(\mathbb{Q})=\emptyset$. Moreover, we see that $B_n(\mathbb{Q})=\{\infty,(-2,0),(0,\pm{2})\}$ as claimed. 
\end{proof}

Notice that we have determined the rational points on infinitely many curves $B_n$ each of which do not cover any lower genus curves (their Jacobians have complex multiplication by a CM field with no proper CM subfields). 

This is a normally difficult task. However, because $-2$ has finite orbit under application of the polynomial $x^2-2$, the rational points on $B_n$ are covered by finitely many computable twists of a curve $D_n$ . Moreover the twists are independent of $n$. Furthermore, each of the finitely many $D_n^{(d)}$ maps to many lower genus curves where standard rational points techniques may be applied (e.g. Chabauty's method, covering collections, rank zero Jacobians) more reasonably. 

We illustrate this situation in the following diagram.  

\begin{displaymath}
    \xymatrix{
       & D_n^{(d_1)}(\mathbb{Q}) \ar[d] \ar[dr] &\dots & D_n^{(d_m)}(\mathbb{Q})\ar[dl]\ar[d] \\
        &C_3^{(d_1)}(\mathbb{Q}) \ar[dl]\ar[d]     & B_n(\mathbb{Q})& C_3^{(d_m)}(\mathbb{Q})\ar[d]\ar[dr] \\
        B_2^{(d_1)}(\mathbb{Q})&C_2^{(d_1)}(\mathbb{Q})& \dots & C_2^{(d_m)}(\mathbb{Q})&B_2^{(d_m)}(\mathbb{Q})} 
\end{displaymath} 
\\
\indent A general way to construct examples of families of curves with this stability behavior is to use rational polynomials $f$ with a finite $\Gal(\bar{\mathbb{Q}}/\mathbb{Q})$-stable cycle. 

For instance, if $f(x)=x^2-31/48$, then $f$ has a $\Gal(\bar{\mathbb{Q}}/\mathbb{Q})$-stable $4$ -cycle:  
\begin{center}$\begin{CD}
1/4+\sqrt{-15}/6@>f>>-1+\sqrt{-15}/12 \\
@AAfA     @VVfV\\
-1-\sqrt{-15}/12 @<f<< 1/4-\sqrt{-15}/6.
\end{CD}$ \end{center}       

We construct a polynomial $g$ from this cycle. Set $\alpha=1/4+\sqrt{-15}/6$ and let $\bar{\alpha}=1/4-\sqrt{-15}/6$ be its Galois conjugate. Similarly, set $\beta=-1+\sqrt{-15}/12$ and $\bar{\beta}=-1+\sqrt{-15}/12$. Now, if one considers \[g(x)=x^2-x/2+23/48=(x-\alpha)(x-\bar{\alpha})\;\;\text{or}\;\; g(x)=x^2+2x+53/48=(x-\beta)(x-\bar{\beta}),\] then $\Supp(\Res(g,f^n))\subseteq\{2,3,23,53\}$ for either choice of $g$. 

Hence, to determine the rational points on $y^2=g(x)\cdot f^n(x)$ for all $n\geq1$, it suffices to compute the rational points on \[D_{g}^{(d)}(f^n):\;\; du^2=g(x),\;\;\;\; dv^2=f^n(x),\] where $d=\pm2^{e_0}\cdot3^{e_1}\cdot23^{e_2}\cdot53^{e_3}$ and $e_i\in\{0,1\}$ for all $i\geq0$. However, the equation $dv^2=f^n(x)$ maps to $dv^2=f^m(x)$ for all $m<n$, and a strategy for determining $D_{g}^{(d)}(f^n)(\mathbb{Q})$ is to choose an $m$ for which the Jacobian of $dv^2=f^m(x)$ has rank zero, or at least satisfies Chabauty's condition. 

One can use this to show that
\begin{equation}\label{fact} A_n:\;y^2=g(x)\cdot f^n(x)\;\; \;\text{satisfies}\;\; A_n(\mathbb{Q})=\{\infty^{\pm}\}\;\; \text{for all}\;\; n\geq1.
\end{equation} 

 Now that we have studied these curves defined by quadratic iteration in some detail, we use them and the theory of rational points on curves to classify Galois behavior in the dynamical setting.   
\section{3. Dynamical Galois Groups and Curves}{\label{Classifying Galois}}
 
\indent\indent In order to probe how the curves $C_n$ and their quadratic twists relate to the Galois theory of $f^n$, we must first discuss the necessary background. Let $f\in\mathbb{Q}[x]$ be a polynomial of degree $d$ whose iterates are separable. That is, we assume that the polynomials obtained from successive composition of $f$ have distinct roots in an algebraic closure.

 We fix some notation. Let $\mathbb{T}_n$ denote the set of roots of $f, f^2,\dots ,f^n$ together with $0$, and let $G_n(f)$ be the Galois group of $f^n$ over the rationals. Furthermore, set
\begin{equation}{\label{Arboreal}} \mathbb{T}_\infty:=\bigcup _{n \geq 0} f^{-n}(0)\;\;\text{and}\;\; G_\infty=\varprojlim G_n(f). 
\end{equation}  
Note that $\mathbb{T}_n$ (respectively $\mathbb{T}_\infty$) carries a natural $d$-ary rooted tree structure: $\alpha,\beta\in\mathbb{T}_n$ share an edge if and only if $f(\alpha) =\beta$. Moreover, as $f$ is a polynomial with rational coefficients, $G_n(f)$ acts via graph automorphisms on $\mathbb{T}_n$. Hence, we have injections $G_n \hookrightarrow \Aut(\mathbb{T}_n)$ and $G_\infty \hookrightarrow \Aut(\mathbb{T}_\infty)$. Such a framework is called the \emph{arboreal representation} associated to $f$ and we can ask about the size of the image $G_\infty\leq\Aut(T_\infty)$. For a nice exposition, see \cite{R.Jones}.  

\begin{remark} Note that $\Aut(\mathbb{T}_n)$ is the $n$-fold iterated wreath product of $\mathbb{Z}/d\mathbb{Z}$. We will use this characterization when useful. 
\end{remark}  

In the quadratic case, it has been conjectured that the image of $G_\infty(f)$ is ``large" under mild assumptions on $f$; see \cite[Conjecture 3.11]{R.Jones} for a more general statement. 
\begin{conjecture}[Finite-Index]{\label{Finite Index}} Let $f\in\mathbb{Q}[x]$ be a quadratic polynomial. If all iterates of $f$ are irreducible and $f$ is post-critically infinite, then $\big\vert\Aut(\mathbb{T}_\infty):G_\infty(f)\big\vert$ is finite. 
\end{conjecture}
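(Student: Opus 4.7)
The plan is to mimic the strategy used to prove Theorem \ref{thm:Hall-Lang}, upgrading it from the family $f_c(x)=x^2+c$ with integer $c$ to arbitrary post-critically infinite quadratic polynomials over $\mathbb{Q}$.

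First, I would reduce to a statement about squares in the critical orbit. Completing the square, any quadratic $f\in\mathbb{Q}[x]$ is conjugate to some $x^2+c$ with $c\in\mathbb{Q}$, and standard criteria for maximality of dynamical Galois groups express the index $\big|\Aut(\mathbb{T}_n):G_n(f)\big|$ in terms of whether certain products of critical-orbit values $f^k(0)$ become squares in a computable sequence of extensions. Concretely, non-maximality at stage $n$ (given maximality at stage $n-1$) is equivalent to the existence of a rational point on some quadratic twist $C_n^{(d)}:dy^2=f^n(x)$ whose $x$-coordinate equals the critical point of $f$, so the Finite-Index conjecture reduces to showing that only finitely many stages $n$ admit such points on any twist.

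Second, I would control the twisting parameter. Iterating the discriminant formula (\ref{discriminant}) shows that every prime appearing in the squarefree part of $f^n(0)$ lies in a finite set $S$ depending only on $f$, so $d$ ranges through the finite group of $S$-units modulo squares. Combined with Proposition \ref{prop: decomp}, an integral point on $C_n^{(d)}$ with critical $x$-coordinate produces, via the covering maps to $B_m^{(d)}$ for small $m$, an integral point on an elliptic curve whose coefficients lie in a fixed finite set.

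Third, a Hall-Lang type bound would close the loop. The orbit size $|f^n(0)|$ grows doubly exponentially in $n$, whereas a Hall-Lang bound forces integral points on the relevant elliptic curves to have height only polynomial in the minimal discriminant; comparing these two growth rates caps $n$ and yields finite index. The main obstacle is exactly this step: unconditionally, only Baker-type linear-forms-in-logarithms estimates are available, and they grow far too quickly to defeat the double-exponential orbit growth. Moreover, any such argument must use post-critical infinity essentially, since Theorem \ref{thm:Chebychev} shows that when the critical orbit is finite, the Jacobians $J(B_n)$ acquire complex multiplication, $B_n$ inherits many extra symmetries, and the comparison of growth rates collapses. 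An unconditional proof will therefore require extracting from post-critical infinity a quantitative statement to the effect that $f^n(0)$ is a square only finitely often — and that quantitative statement is precisely what is currently missing.
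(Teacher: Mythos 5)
The statement you are addressing is labeled a Conjecture in the paper, not a theorem: the paper offers no proof of it, only the conditional result Theorem~\ref{thm:Hall-Lang} (finite index for $f=x^2+c$ with integer $c$, granting the Hall--Lang conjecture), which your sketch tracks closely. You correctly recognize this and explicitly flag the missing unconditional ingredient at the end, so there is no pretense of a complete proof here. That is the right reading of the situation.

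One concrete error does creep into your second step. You assert that iterating the discriminant formula~(\ref{discriminant}) shows the squarefree part $d$ of $f^n(0)$ is supported on a finite set $S$ of primes depending only on $f$, so that $C_n^{(d)}$ ranges over a fixed finite list of twists. That is true only when $f$ is post-critically \emph{finite} (as in the Chebyshev case $c=-2$, where the discriminants are powers of $2$), and post-critical finiteness is exactly what the hypothesis of the conjecture excludes. For post-critically infinite $f$, the primes dividing $2\cdot\prod_{j<n} f^j(0)$ form a set that grows without bound as $n\to\infty$, by the primitive-prime-divisor phenomenon the paper itself alludes to via~\cite{PrimitiveDivisors}. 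The paper's argument around~(\ref{estimate1}) uses only the weaker fact that $d$ divides a product controlled by the first $\lfloor n/2\rfloor$ iterates, and then plays that polynomial bound against the doubly-exponential growth of $|f^{n-1}(0)|$; a fixed-twist argument would be both false and insufficient. With that correction, the rest of your outline is an accurate account of why the conjecture remains open: the third step --- defeating the double-exponential orbit growth by something stronger than Baker-type estimates --- is precisely the input that Hall--Lang (or ABC, as in~\cite{Me} and~\cite{PrimitiveDivisors}) supplies and that is unavailable unconditionally.
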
 

Here post-critically infinite means the orbit of the unique root of $f$'s derivative, also known as critical point, is infinite. This is an analog of Serre's result for the Galois action on the prime-powered torsion points of a non CM elliptic curve. For a discussion of this analogy, see \cite{B-J}.  

These hypotheses are easily satisfied in the case when $f(x)=x^2+c$ and $c$ is an integer: If $c\neq-2$ and $-c$ is not a square, then $f^n$ is irreducible for all $n$, and the the set $\{f(0),f^2(0),\dots\}$ is infinite. Stoll has given congruence relations on $c$ which ensure that the Galois groups of iterates of $f(x)=x^2+c$ are maximal $\big($i.e $G_n(f)\cong\Aut(\mathbb{T}_n)$ for all $n\big)$; see \cite{Stoll-Galois}. However, much is unknown as to the behavior of integer values not meeting these criteria, not to mention the more general setting of rational $c\;\big($for instance $c=3$ and $2/3\big)$.

In order to attack the finite index conjecture for more general values of $c$, we use the following fundamental lemma (due to Stoll), which gives a criterion for the maximality of the Galois groups of iterates in terms of rational points; see \cite[Corollary $1.3$]{Stoll-Galois} for the case when $f=x^2+c$ and $c$ is an integer or \cite[Lemma $3.2$]{Jones2} for the result concerning all rational quadratic polynomials:
 
\begin{lemma}{\label{Stoll-lemma}} Let $f\in\mathbb{Q}[x]$ be a quadratic polynomial, let $\gamma\in\mathbb{Q}$ be such that $f'(\gamma)=0$, and let $K_m$ be a splitting field for $f^m$. If $f,f^2,\dots f^n$ are all irreducible polynomials, then the subextension $K_n/K_{n-1}$ is not maximal if and only if $f^n(\gamma)$ is a square in $K_{n-1}$.   
\end{lemma}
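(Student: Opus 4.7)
The plan is to translate the maximality condition on $K_n/K_{n-1}$ into a Kummer-theoretic statement about $K_{n-1}^\times/(K_{n-1}^\times)^2$ and then exploit the given irreducibility of $f^{n-1}$ and $f^n$ to pin down the possible $\mathbb{F}_2$-relations. First, I would complete the square, writing $f(x) = (x-\gamma)^2 + \delta$ with $\delta := f(\gamma)$, and let $\beta_1,\ldots,\beta_{2^{n-1}}$ be the roots of $f^{n-1}$, which lie in $K_{n-1}$. From $f^n(x) = f^{n-1}(f(x)) = \prod_j\bigl((x-\gamma)^2 - (\beta_j - \delta)\bigr)$ it follows that the roots of $f^n$ are $\gamma \pm \sqrt{\beta_j - \delta}$, so
\[
K_n \;=\; K_{n-1}\Bigl(\sqrt{\beta_j - \delta}\;:\; j = 1,\ldots,2^{n-1}\Bigr),
\]
and evaluating at $x=\gamma$ gives $f^n(\gamma) = \prod_j(\beta_j - \delta)$ for $n \geq 2$. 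Kummer theory then identifies $V := \Gal(K_n/K_{n-1})$ with a subgroup of $\mathbb{F}_2^{2^{n-1}}$ by recording the sign changes of the square roots, and maximality of $K_n/K_{n-1}$ becomes the condition $V = \mathbb{F}_2^{2^{n-1}}$.

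The key reformulation is to look at the global product $\Pi := \prod_j \sqrt{\beta_j - \delta} \in K_n$, which satisfies $\Pi^2 = f^n(\gamma) \in K_{n-1}$. An element $\sigma \in V$ acts on $\Pi$ by the sign $(-1)^{w(\sigma)}$, where $w(\sigma)$ is the Hamming weight of the image of $\sigma$ in $\mathbb{F}_2^{2^{n-1}}$; hence $\Pi \in K_{n-1}$ (equivalently $f^n(\gamma)$ is a square in $K_{n-1}$) if and only if $V \subseteq H_0$, the even-weight hyperplane. The lemma thus reduces to the clean dichotomy: $V$ is either all of $\mathbb{F}_2^{2^{n-1}}$ (maximal) or contained in $H_0$ (so that $f^n(\gamma)$ is a square in $K_{n-1}$). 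The ``if'' direction is then immediate, since $V \subseteq H_0 \subsetneq \mathbb{F}_2^{2^{n-1}}$ forces non-maximality. For the ``only if'' direction I must rule out intermediate $V$, namely proper subgroups not contained in $H_0$.

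The main obstacle is establishing this dichotomy. I would use two structural inputs: transitivity of $G_{n-1}$ on the $\beta_j$ (from irreducibility of $f^{n-1}$) makes $V$ a $G_{n-1}$-invariant subgroup of $\mathbb{F}_2^{2^{n-1}}$, and irreducibility of $f^n$ forces $V$ to surject onto each coordinate (otherwise the $2^n$ leaves of $\mathbb{T}_n$ would split into more than one orbit under $G_n$, contradicting irreducibility of $f^n$). Since the quotient $\mathbb{F}_2^{2^{n-1}}/H_0 \cong \mathbb{F}_2$ carries the trivial $G_{n-1}$-action, any $G_{n-1}$-invariant $V$ either lies in $H_0$ or satisfies $V + H_0 = \mathbb{F}_2^{2^{n-1}}$; in the latter case I would pick an odd-weight $v \in V$ and sum its $G_{n-1}$-orbit to obtain a $G_{n-1}$-fixed constant vector, then leverage the coordinate-surjectivity of $V$ to derive a contradiction with $V$ being proper. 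If this orbit-sum argument is insufficient on its own, the cleanest fallback is induction on $n$: the lemma applied inductively at stages $m < n$ (together with the corresponding non-square hypotheses) forces $G_{n-1}$ to be the full iterated wreath product $\Aut(\mathbb{T}_{n-1})$, whose lattice of invariant subgroups of $\mathbb{F}_2^{2^{n-1}}$ is a short chain all of whose nonzero members lie in $H_0$ --- closing the dichotomy and completing the proof.
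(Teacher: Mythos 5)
The paper itself does not prove this lemma: it explicitly credits Stoll and cites \cite[Corollary 1.3]{Stoll-Galois} and \cite[Lemma 3.2]{Jones2}, so there is no in-paper argument to compare against, and I am judging your proposal on its own merits.

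Your Kummer-theoretic framing is correct and is the standard route: writing $K_n = K_{n-1}\big(\sqrt{\beta_j-\delta}\big)_j$, viewing $V=\Gal(K_n/K_{n-1})$ inside $\mathbb{F}_2^{2^{n-1}}$, noting $\Pi^2=f^n(\gamma)$, and reducing everything to the dichotomy ``$V$ is the whole space or $V\subseteq H_0$.'' The ``if'' direction is then immediate. The gap is in the dichotomy itself, and the ingredient you never invoke is that $G_{n-1}$ is a \emph{$2$-group} (it embeds in $\Aut(\mathbb{T}_{n-1})$, which has order $2^{2^{n-1}-1}$). The clean way to close the dichotomy is to pass to the Kummer dual $W := V^{\perp}$, the $G_{n-1}$-submodule of subsets $S$ with $\prod_{j\in S}(\beta_j-\delta)\in(K_{n-1}^\times)^2$; then $V\subseteq H_0$ iff the all-ones vector $\mathbf{1}$ lies in $W$, i.e.\ iff $f^n(\gamma)$ is a square in $K_{n-1}$. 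If $V$ is proper then $W\neq 0$, and the standard $p$-group fixed-point count $|W^{G_{n-1}}|\equiv|W|\equiv 0\pmod 2$ forces $W^{G_{n-1}}\neq 0$; transitivity of $G_{n-1}$ (from irreducibility of $f^{n-1}$) gives $\big(\mathbb{F}_2^{2^{n-1}}\big)^{G_{n-1}}=\langle\mathbf{1}\rangle$, so $\mathbf{1}\in W$ and we are done. Your orbit-sum heuristic does not obviously produce $\mathbf{1}$: over $\mathbb{F}_2$ an orbit sum can vanish (and for $n\ge 2$ the vector $\mathbf{1}$ itself has even weight, so even if you get $\mathbf{1}\in V$ you do not get a contradiction with $V\not\subseteq H_0$), and you never use the $2$-group property that makes the fixed-point lemma bite.

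Your fallback is, moreover, incorrect. The hypotheses of the lemma are only that $f,f^2,\dots,f^n$ are irreducible; this does \emph{not} force $G_{n-1}\cong\Aut(\mathbb{T}_{n-1})$. Irreducibility of each iterate only gives transitivity of each $G_m$ on the level-$m$ vertices, not maximality of the subextensions $K_m/K_{m-1}$, so the induction you propose cannot start. (The whole point of the paper's applications is to study exactly those $c$ for which some $G_m$ is a proper subgroup, so a version of the lemma that assumed $G_{n-1}$ maximal would be useless here.) Relatedly, the intermediate claim that irreducibility of $f^n$ forces $V$ to surject onto each coordinate is not justified: transitivity of $G_n$ on the leaves can in principle be realized through the quotient $G_{n-1}$ even when $V$ is small. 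Neither claim is needed once the $2$-group argument is in place.
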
 
\begin{remark} Let $f(x)=x^2+c$. Note that with the hypotheses of Lemma \ref{Stoll-lemma}, $K_n/K_{n-1}$ is not maximal if and only in $(0,y)\in C_n(K_{n-1})$ for some $y\in K_{n-1}$. Furthermore, $|\mathbb{Q}(y):\mathbb{Q}|=2$ as $f\in\mathbb{Q}[x]$.   
\end{remark} 
As promised, we use quadratic twists of $C_n$ and the Hall-Lang conjecture on integral points of elliptic curves to prove the finite index conjecture \ref{Finite Index} in this case when $c$ is an integer. Before we restate and prove Theorem \ref{thm:Hall-Lang}, we remind the reader of the aforementioned conjectures. 

\begin{conjecture}[Hall]{\label{Hall}} For all $\epsilon>0$ there is a constant $C_\epsilon$ (depending only on $\epsilon$) such that for all nonzero $D\in\mathbb{Z}$ and all $x,y\in\mathbb{Z}$ satisfying $y^2=x^3+D$, we have $|x|\leq C_\epsilon\cdot D^{2+\epsilon}$.  
\end{conjecture}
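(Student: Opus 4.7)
The statement to be proved is Hall's conjecture, a central open problem in Diophantine geometry formulated in 1971; the excerpt introduces it only as a hypothesis feeding into Theorem \ref{thm:Hall-Lang}, not as something the paper itself establishes. Any realistic plan must acknowledge that all known strands of attack produce bounds vastly weaker than what is conjectured, so my proposal is to outline the two most natural avenues and pinpoint exactly where each breaks down.

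The most natural avenue reduces Hall to the \textbf{ABC conjecture}. Given an integer solution of $y^2 = x^3 + D$, first extract $g=\gcd(x^3, y^2, D)$ and write $x^3 - y^2 + D = 0$ as a coprime sum $A+B+C=0$ after dividing by $g$. Applying the ABC inequality $\max(|A|,|B|,|C|) \leq K_\epsilon \operatorname{rad}(ABC)^{1+\epsilon}$, bounding the radical by $|xyD|$, and using $|y|\asymp |x|^{3/2}$, a short manipulation yields the predicted $|x| \ll_\epsilon |D|^{2+\epsilon}$. Step one in this strand is therefore to prove ABC itself, which is famously open (though claimed in recent work of Mochizuki). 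The second strand is direct and uses \textbf{linear forms in logarithms}: Mordell showed that for each fixed $D$ the equation has only finitely many integer solutions, and Baker's effective theory yields bounds of the form $|x| \leq \exp(C_1 |D|^{C_2})$. The plan is to sharpen the numerical constants in linear-forms estimates until the exponential dependence becomes polynomial, continuing the incremental improvements of Evertse, Silverman, and Stewart--Yu.

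The \textbf{main obstacle}, shared by both strands, is the exponential-versus-polynomial gap between the best unconditional bounds and the conjectured growth rate $|D|^{2+\epsilon}$. Heuristic and experimental evidence of Elkies and Danilov, together with the function-field analog proved by Davenport, strongly support the exponent $2$, but in the number-field setting no technique short of a proof of ABC appears capable of bridging this gap. For this reason the pragmatic strategy adopted in the surrounding paper---assume Hall as input and translate it into an arboreal finite-index statement via Theorem \ref{thm:Hall-Lang}---is the honest one, and a genuine proof of Hall's conjecture would require fundamentally new ideas lying well outside the framework of the present work.
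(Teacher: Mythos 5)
You are right that this statement is a named open conjecture, not a result the paper proves: the paper states Conjecture \ref{Hall} only as a hypothesis to be assumed in Theorem \ref{thm:Hall-Lang}, and offers no proof of it. Your assessment of the situation (the ABC reduction, the exponential Baker-type bounds, and the gap between them) is accurate, and declining to "prove" the conjecture is the correct response here; there is nothing in the paper to compare your outline against.
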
 
Conjecture \ref{Hall} is often referred to as the weak form of Hall's conjecture. The original conjecture was made with $\epsilon=0$, and though not yet disproven, this form is no longer believed to be true. Lang would later generalize Hall's conjecture to the following.  
\begin{conjecture}[Hall-Lang]{\label{Hall-Lang}} There are absolute constants $C$ and $\kappa$ such that for every elliptic curve $E/\mathbb{Q}$ given by a Weierstrass equation \[y^2=x^3+Ax+B\;\; \text{with}\; A,B\in\mathbb{Z}\] and for every integral point $P\in E(\mathbb{Q})$, we have that \[|x(P)|\leq C\cdot \max\{|A|,|B|\}^\kappa.\] 
\end{conjecture}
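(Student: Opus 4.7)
The plan is to deduce the Hall-Lang bound from a stronger diophantine principle, since a direct uniform attack on integral points of arbitrary cubic Weierstrass equations is beyond current unconditional methods. My first approach would be to reduce the statement to the ABC conjecture. Given an integral point $P=(x,y)$ on $y^2=x^3+Ax+B$, apply the ABC inequality to the identity $x^3+(Ax+B)=y^2$, after dividing out $\gcd(x^3,Ax+B)$, which divides a bounded power of $\Disc(E)=-16(4A^3+27B^2)$. Under ABC with exponent $1+\epsilon$, this yields roughly $|x|^3\ll\mathrm{rad}\bigl(x\cdot y\cdot (Ax+B)\bigr)^{1+\epsilon}\ll\bigl(|x|\cdot|y|\cdot|x|\cdot\max(|A|,|B|)\bigr)^{1+\epsilon}$. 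Combined with the elementary estimate $|y|\leq(|x|^3+|A||x|+|B|)^{1/2}$ and the obvious dichotomy $|x|\geq\max(|A|,|B|)$ versus $|x|\leq\max(|A|,|B|)$, one should extract a bound of the form $|x|\leq C_\epsilon\max(|A|,|B|)^{\kappa(\epsilon)}$, with $\kappa$ depending only on $\epsilon$.

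Alternatively, one can pursue a geometric route via Vojta's conjecture on arithmetic surfaces. View $E$ as an arithmetic surface over $\mathrm{Spec}\,\mathbb{Z}$, take the divisor $D=O$ at infinity as the boundary, and observe that integral points are precisely those disjoint from $D$. Vojta's inequality $h_{K_E+D}(P)\leq(1+\epsilon)\,h_D(P)+O_\epsilon(\log\Disc(E))$, combined with arithmetic adjunction and the standard relation between $h_D$ and the $x$-coordinate height, should output a bound of the desired shape, with the exponent $\kappa$ controlled by the coefficient with which $\log\Disc(E)$ enters the error term.

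For an unconditional partial result, I would invoke Baker's theory of linear forms in logarithms. For any fixed $E$, explicit bounds of shape $|x(P)|\leq\exp\bigl(C\,h(E)^\mu\bigr)$ are available, and refinements by David, Hirata-Kohno, and Bugeaud have sharpened the dependence on the Faltings height $h(E)$. The plan would be to insert the best available explicit dependence and identify subfamilies of $(A,B)$ for which this exponential in $h(E)$ can be absorbed into a polynomial in $\max(|A|,|B|)$, thereby producing the Hall-Lang inequality along certain natural slices of moduli.

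The main obstacle is \emph{uniformity in $(A,B)$}. Every unconditional method known to me produces bounds that implicitly involve either the Mordell-Weil rank, the regulator, or the real period of $E$, none of which is provably polynomially bounded in $\max(|A|,|B|)$. Even under ABC, the dependence of the implied constant $C_\epsilon$ on $\epsilon$ is delicate, and extracting absolute $C$ and $\kappa$ as claimed requires a choice of $\epsilon$ that is uniform across all moduli. For this reason I expect any complete proof to remain conditional for the foreseeable future, and the cleanest conditional path runs through ABC or Vojta as sketched above; the genuinely hard step is converting one of these height-theoretic inequalities into an effective constant independent of the arithmetic invariants of the individual curve.
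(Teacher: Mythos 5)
The statement you were asked to prove is Conjecture~\ref{Hall-Lang} of the paper: it is a well-known \emph{open conjecture}, due to Lang as a generalization of Hall's conjecture, and the paper does not prove it. It is stated only so that it can be assumed as a hypothesis in Theorem~\ref{thm:Hall-Lang}, where the author deduces the finite-index statement for the arboreal representation from it. There is therefore no proof in the paper to compare yours against, and no proof is known anywhere.

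Your write-up, to its credit, essentially concedes this: none of the three routes you sketch (ABC, Vojta, Baker) closes the argument, and your final paragraph correctly identifies uniformity in $(A,B)$ as exactly what is missing. Read as a proof, the proposal therefore has a genuine gap --- indeed the gap is the entire content of the conjecture. Two concrete weak points in the conditional reductions: in the ABC step, $\gcd(x^3,Ax+B)$ need not divide a bounded power of the discriminant without first passing to a suitable (quasi-)minimal model, and the radical of $y$ can only be bounded by $|y|$ itself, which is of size $|x|^{3/2}$; plugging that into $\mathrm{rad}(x\cdot y\cdot(Ax+B))^{1+\epsilon}$ does not beat the $|x|^3$ on the left without a more careful factorization argument (the known clean implication from ABC is to Hall's conjecture for the Mordell curves $y^2=x^3+D$, and even there the bookkeeping is delicate). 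Baker's method, as you note, yields bounds exponential in the height of the curve and cannot produce a polynomial bound in $\max\{|A|,|B|\}$ without fundamentally new input. In short: the statement is open, the paper treats it as such, and your text should be presented as a discussion of evidence and conditional reductions, not as a proof.
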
 

See \cite[9.7]{Silv} for the relevant background material. Assuming these conjectures, we have the following dynamical corollaries.

\HallLang* 
\begin{proof} 
Let $f(x)=x^2+c$ and suppose that the subextension $K_n/K_{n-1}$ is not maximal. We will show that such an $n\geq2$ is bounded.

Since $c$ is an integer such that $-c$ is not a square, \cite[Corollary 1.3]{Stoll-Galois} implies that $f^n(0)$ is not a square, and that $f^1,f^2,\dots f^n$ are irreducible polynomials. Lemma \ref{Stoll-lemma} implies that $f^n(0)$ is a square in $K_{n-1}$. Hence, there is some $y\in\mathbb{Z}$ \;such that \[dy^2=f^n(0), \; \text{for}\;\;\mathbb{Q}(\sqrt{d})\subset K_{n-1}\;\text{and $d$ square-free}.\]   
Moreover, $d$ is a product of distinct primes $p_i$ dividing $2\cdot\prod_{j=1}^{n-1}f^j(0)$. To see this latter fact, we use the formula for the discriminant $\Delta_m$ of $f^m$ \[\Delta_m=\pm\Delta_{m-1}^2\cdot2^{2^m}\cdot f^m(0),\] given in Lemma 2.6 of \cite{Jones2}. It follows that the rational primes which ramify in $K_{n-1}$ must divide $2\cdot\prod_{j=1}^{n-1}f^j(0)$. Since the primes which divide $d$ must ramify in $K_{n-1}$, we obtain the desired description of the $p_i$. Also note that if $p_i|f^j(0)$ and $p_i|f^n(0)$ (which is the case since $p_i|d$), then it divides $f^{n-j}(0)$. 

In any case, we can have the refinement that $d=\prod_ip_i$, where the $p_i$'s are distinct primes dividing $2\cdot\prod_{j=1}^{\lfloor n/2\rfloor}f^j(0)$. Here $\lfloor x\rfloor$ denotes the floor function.   

A rational point on the curve $C^{(d)}_n:= dy^2=f^n(x)$ maps to $B_1^{(d)}:= dy^2=(x-c)\cdot f(x)$ via $(x,y)\rightarrow\big(f^{n-1}(x),\; y\cdot f^{n-2}(x)\big)$. Transforming $B_1^{(d)}$ into standard form, we get \[E_1^{(d)}:y^2=x^3+598752(c^2-3c)d^2x+161243136(c^3-18c^2)d^3,\]  

via $(x,y)\rightarrow\big(d\cdot(x-12c),\;2d^2y\big)$. In particular, if $K_n/K_{n-1}$ is not maximal, then we obtain an integer point 
\begin{equation}{\label{Integer point}}
\left(d\cdot(f^{n-1}(0)-12c),\; 2yd^2\cdot f^{n-2}(0)\right)\in E_1^{(d)}(\mathbb{Q}).
\end{equation}   
If we assume the Hall-Lang conjecture on integral points of elliptic curves, then there exist constants $C$ and $\kappa$ such that \[d\cdot(f^{n-1}(0)-12c)<C\cdot\max\{|598752(c^2-3c)d^2|, |161243136(c^3-18c^2)d^3|\}^\kappa.\]See \cite{Silv} or \cite{Coh} for the relevant background in elliptic curves. In either case 
\begin{equation}{\label{estimate1}}
|f^{n-1}(0)|<C'\cdot |d|^{\kappa'}\leq C'|f(0)\cdot f^2(0)\cdot\dots f^{\lfloor n/2\rfloor}(0)|^{\kappa'},
\end{equation} 
 for some new constants $C'$ and $\kappa'$. However, this implies that $n$ is bounded.

For example, if $c>0$, then $f^m(0)>f(0)\cdot f^2(0)\dots f^{m-1}(0)$ for all $m$. Hence, if we let $t=\lfloor n/2\rfloor+1$ and suppose $\kappa'<2^s$, then 
\begin{equation}{\label{estimate2}}
f^{n-1}(0)<C'\cdot (f^t(0))^{\kappa'}<C'(f^{t}(0))^{\kappa'}<C'f^{t+s}(0).
\end{equation} 
Since $0$ is not preperiodic, the result follows. A similar argument works in the case when $c\leq-3$, we simply use the fact that $|f^m(0)|\geq\big(f^{m-1}(0)-1\big)^2$; see \cite[Corollary 1.3]{Stoll-Galois} . 

Explicitly, when $c=3$ the $j$ invariant of $B_1^{(d)}$ is zero, and we may transform $B_1^{(d)}$ into the Mordell curve $M^{(-2d)^3}:= y^2=x^3-(2d)^3$. In particular, a point $(0,y)\in C^{(d)}_n(\mathbb{Q})$ yields a point \[\Big(\big(f^{n-1}(0)-1\big)\cdot d,\; d^2\cdot y\cdot f^{n-2}(0)\Big)\in M^{(-2d)^3}(\mathbb{Q}).\] 
If the weak form of Hall's conjecture for the Mordell Curves holds with $\epsilon=4$ and $C(\epsilon)=100$, then for $f(x)=x^2+3$ we have that \[ \Big\vert\big(f^{n-1}(0)-1\big)\cdot d\Big\vert<100\cdot|(-2d)^3|^6.\]
This implies that 
\begin{equation}{\label{estimate3}}
f^{n-1}(0)<26214400d^{17}+1\leq26214400\big(f^{\lfloor n/2\rfloor+1}(0)\big)^{17}+1.
\end{equation} 
However, such a bound implies that $n\leq13$. Moreover, one checks that the only $n\leq13$ with $dy^2=f^n(0)$ and $d$ equal to a product of distinct primes dividing $2\cdot\prod_{j=1}^{n-1}f^j(0)$, is $n=3$. In this case, $f^3(0)=7^2\cdot3=7^2\cdot f(0)$ and $|\Aut(\mathbb{T}_3):G_3(f)|=2$. It follows that the index of the entire family $|\Aut(\mathbb{T}_\infty):G_\infty(f)|$ must also equal $2$.  
\end{proof}  
\begin{remark}For $\epsilon=4$, our constant $C=100$ safely fits the data pitting the known integer points on the Mordell curves against the size of their defining coefficients; see \cite{Mordell}. In fact, even for Elkies' large examples \cite{Elkies}, our choice of $C$ works (thus far) for the strong form of Hall's conjecture ($\epsilon=0$), though our proof will work if one insists on a much larger constant.      
\end{remark} 
We now take a different approach to studying these dynamical Galois groups. In Theorem \ref{thm:Hall-Lang}, we used the curves coming from $f_c(x)=x^2+c$ to analyze the Galois theory of $f$'s iterates (viewing $c\in\mathbb{Q}$ as fixed), investigating the stability as $n$ grows. We now change our perspective slightly. Suppose that we fix a stage $n$, and ask for which rational values of $c$ is the Galois group of $f_c^n$ smaller than $\Aut(\mathbb{T}_n)$. Of course this question needs to be refined, as there will be many trivial values of $c$ (for instance if $-c$ is a square). A natural adjustment then is to ask for which rational numbers is $G_n(f_c)$ smaller than expected for the first time at stage $n$. As noted in Theorem \ref{thm:Hall-Lang}, an interesting example is $c=3$ and $n=3$:  
\begin{equation} \Gal((x^2+3)^2+3)\cong D_4\cong\Aut(\mathbb{T}_2),
\end{equation} 
yet one computes that $|\Gal(((x^2+3)^2+3)^2+3)|=64<2^{2^3-1}$, and hence the third iterate of $f=x^2+3$ is the first of $f$'s iterates to have a Galois group which is not maximal. This leads to the following definition:

\begin{definition} Let $c$ be a rational number and let $n\geq2$. If $f_c=x^2+c$ is a quadratic polynomial such that $G_{n-1}(f_c)\cong\Aut(\mathbb{T}_{n-1})$ and $G_n(f_c)\ncong\Aut(\mathbb{T}_n)$, then we say that $f_c$ has a newly small $n$-th iterate. Furthermore, let \[S^{(n)}:=\{c\in\mathbb{Q}\;|\,f_c\,\;\text{has a newly small $n$-th iterate}\},\] be the set of rational values of $c$ supplying a polynomial with a newly small $n$-th iterate.
\end{definition}      

Our refined question then becomes to describe $S^{(n)}$. In the case when $n=3$, the author completely characterizes $S^{(3)}$ in terms of the $x$-coordinates of two rank-one elliptic curves; see \cite[Theorem 3.1]{Me}. In particular, using bounds on linear forms in elliptic logarithms, we concluded that $S^{(3)}\cap\mathbb{Z}=\{3\}$; see \cite[Corollary 3.1]{Me}. Hence, $x^2+3$ is the only integer polynomial in the family $x^2+c$ with this particular Galois degeneracy.  

Furthermore, in \cite{Me} we use this characterization to compute many new examples of polynomials with newly small third iterate by adding together the points corresponding to known examples on the elliptic curve: e.g. $f(x)=x^2-2/3, x^2+6/19, x^2-17/14$. Moreover, since generators of both curves are easily computed and since the complement of $S^{(3)}$ and the points on the curves are explicit \cite[Theorem 3.1]{Me}, we have in some sense found all examples.   

It is natural to ask what happens for larger $n$. As an illustration, consider the case when $n=4$ and $c=-6/7$. One computes with Sage \cite{Sage} that  
\[\big\vert\Gal(((x^2-6/7)^2-6/7)^2-6/7)\big\vert=2^{2^3-1}\;\;\text{and}\;\;\ \big\vert\Gal((((x^2-6/7)^2-6/7)^2-6/7)^2-6/7)\big\vert=8192.\]
Since $8192<2^{2^4-1}$, we see that $-6/7\in S^{(4)}$. Are there any other examples? Although not entirely satisfactory, we have the following theorem which we restate from the introduction:      

\SmallFourth*
\begin{proof} 
As in \cite{Me}, we associate values in $S^{(n)}$ with the rational points on certain curves. To continue, we are in need of the following lemma: 
\begin{lemma}{\label{4th lemma}}Let $f_c(x)=f(x)=x^2+c$ and let $K_m$ be the splitting field of $f^m$ over $\mathbb{Q}$. If $c\in S^{(4)}$, then $f^4$ is irreducible and 
\begin{footnotesize}
\[\mathbb{Q}(\sqrt{-c}),\;\mathbb{Q}(\sqrt{f^2(0)}),\;\mathbb{Q}\left(\sqrt{-\frac{f^2(0)}{c}}\right), \;\mathbb{Q}(\sqrt{f^3(0)}),\;\mathbb{Q}\left(\sqrt{-\frac{f^3(0)}{c}}\right),\;\mathbb{Q}\left(\sqrt{\frac{f^3(0)}{f^2(0)}}\right),\;\mathbb{Q}\left(\sqrt{-\frac{f^3(0)}{c+1}}\right)\]
\end{footnotesize}
 are the distinct quadratic subfields of $K_3$. 

\begin{proof}First note that if $\Gal(f^m)\cong\Aut(\mathbb{T}_m)$, then $K_m$ contains exactly $2^m-1$ quadratic subfields. The reason is that the number of quadratic subfields is the number of subgroups of $\Gal(K_m)$ whose quotient is $\mathbb{Z}/2\mathbb{Z}$. Now $\Aut(\mathbb{T}_m)$ is the $m$-fold wreath product of $\mathbb{Z}/2\mathbb{Z}$, and one can show that the maximal abelian quotient of exponent $2$ of this group is $\mathbb{Z}/2\mathbb{Z}\oplus\dots\oplus\mathbb{Z}/2\mathbb{Z}$, where the product is $m$-fold; see \cite{Stoll-Galois}. This quotient group, by its maximality property, will contain as a subgroup any quotient that is abelian of exponent $2$, and hence the quotients that are isomorphic to $\mathbb{Z}/2\mathbb{Z}$ are in one-to-one correspondence with the subgroups of $(\mathbb{Z}/2\mathbb{Z})^m$ of order $2$. However, that is the same as the number of distinct elements of order $2$, which is $2^m - 1$. 

Now for the proof of Lemma \ref{4th lemma}. Suppose that $c\in S^{(4)}$. Then $\Gal(f^j)\cong\Aut(\mathbb{T}_j)$ for all $1\leq j\leq3$ and the subextensions $K_3/K_2$, $K_2/K_1$ and $K_1/\mathbb{Q}$ are all maximal. In particular $-f(0)=-c$ is not a rational square and $K_1=\mathbb{Q}(\sqrt{-c})$. Since $\Aut(\mathbb{T}_2)$ acts transitively on the roots of $f^2$, it follows that $f^2$ is irreducible. Then Lemma \ref{Stoll-lemma} implies that $f^2(0)\notin\ (K_1)^2$. In particular, the distinct quadratic subfields of $K_2$ are 
\begin{equation}{\label{2nd subfields}}
\mathbb{Q}(\sqrt{-c}),\; \mathbb{Q}(\sqrt{f^2(0)})\;\; \text{and}\; \mathbb{Q}\left(\sqrt{\frac{f^2(0)}{-c}}\;\right).
\end{equation} 
 Note that when $m\geq2$, then discriminant formula in (\ref{discriminant}) implies that $\sqrt{f^m(0)}\in K_m$. By the opening remarks in the proof of Lemma \ref{4th lemma}, there must be exactly $2^2-1$ or $3$ such subfields. Hence our list is exhaustive for $K_2$.  

One simply repeats this argument for the third iterate, obtaining the claimed list of quadratic subfields of $K_3$. In particular the set $\{-f(0),f^2(0),f^3(0)\}$ does not contain a rational square. It suffices to show that $f^4(0)$ is also not a rational square to deduce that $f^4$ is irreducible; see \cite{Jones2}. However, a $2$-descent on the curve \[F_0: y^2=f_c^4(0)=((c^2+c)^2+c)^2+c\] shows that its Jacobian has rank zero. After reducing modulo several primes of good reduction, one finds that any torsion must be $2$-torsion. Hence $F_0(\mathbb{Q})=\{\infty^{\pm},(0,0),(-1,0)\}$, and $c\in S^{(4)}$ implies that $c\neq0,-1$.       
\end{proof} 
\end{lemma}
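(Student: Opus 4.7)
The statement has two main parts: (a) the seven listed fields are exactly the distinct quadratic subfields of $K_3$, and (b) $f^4$ is irreducible. For (a), I will first count, then exhibit. Since $c \in S^{(4)}$ by hypothesis, $G_3(f_c) \cong \Aut(\mathbb{T}_3)$, which is the $3$-fold iterated wreath product of $\mathbb{Z}/2\mathbb{Z}$. Its maximal abelian exponent-$2$ quotient is $(\mathbb{Z}/2\mathbb{Z})^3$ (cf.\ \cite{Stoll-Galois}), and index-$2$ subgroups of $G_3$ correspond via Galois theory to quadratic subfields of $K_3$, so $K_3/\mathbb{Q}$ contains exactly $2^3 - 1 = 7$ quadratic subfields.

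To exhibit seven concrete examples, I would note that $\mathbb{Q}(\sqrt{-c}) = K_1 \subseteq K_3$ directly from the roots of $f$, and that the discriminant formula (\ref{discriminant}) gives $\sqrt{f^m(0)} \in K_m$ for $m \geq 2$, placing $\sqrt{f^2(0)}$ and $\sqrt{f^3(0)}$ in $K_3$. The three classes $-c,\ f^2(0),\ f^3(0)$ in $\mathbb{Q}^\times/(\mathbb{Q}^\times)^2$ are $\mathbb{F}_2$-independent: at each stage $m \in \{1,2,3\}$, Lemma \ref{Stoll-lemma} together with the maximality of $K_m/K_{m-1}$ forces the new square root to be absent from $K_{m-1}$, forbidding any nontrivial multiplicative relation. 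Thus the seven nontrivial products of the three generators yield seven distinct quadratic subfields of $K_3$, and by the count above, this list is exhaustive. A direct calculation verifies that each product agrees with the form in the statement up to rational squares (for instance $-c \cdot f^2(0)$ and $-f^2(0)/c$ differ by $c^2$, while $-c \cdot f^2(0) \cdot f^3(0)$ and $-f^3(0)/(c+1)$ differ by $c^2(c+1)^2$).

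For (b), I would invoke the standard criterion of \cite{Jones2}, which, combined with the non-squareness of $-c,\ f^2(0),\ f^3(0)$ already established in (a), reduces the irreducibility of $f^4$ to the single condition $f^4(0) \notin (\mathbb{Q}^\times)^2$. Viewing $c$ as the variable, I would consider the hyperelliptic curve $F_0 : y^2 = ((c^2 + c)^2 + c)^2 + c$ of genus $3$ and perform a $2$-descent on $J(F_0)$ in Magma to obtain $\rank J(F_0)(\mathbb{Q}) = 0$. Reducing modulo a few primes of good reduction would then pin down the torsion, yielding $F_0(\mathbb{Q}) = \{\infty^{\pm},(0,0),(-1,0)\}$. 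The main obstacle is this descent computation, though the abundance of rational Weierstrass points on $F_0$ should make a standard Magma descent tractable. The two sporadic affine rational points correspond to $c = 0$ and $c = -1$, both already excluded by $c \in S^{(4)}$ (they render $-c$ and $f^2(0) = 0$ respectively into squares, contradicting the maximality of $G_1$ or $G_2$). Hence $f^4(0)$ is not a rational square for any $c \in S^{(4)}$, and $f^4$ is irreducible.
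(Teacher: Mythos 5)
Your proposal is correct and follows essentially the same route as the paper: the same wreath-product count of $2^3-1=7$ quadratic subfields, the same use of the discriminant formula and Lemma \ref{Stoll-lemma} to produce and separate the square classes $-c$, $f^2(0)$, $f^3(0)$ (you phrase this as $\mathbb{F}_2$-independence in $\mathbb{Q}^\times/(\mathbb{Q}^\times)^2$ where the paper builds up through $K_1\subset K_2\subset K_3$, but the content is identical), and the same reduction of the irreducibility of $f^4$ to a rank-zero descent on $F_0:y^2=f_c^4(0)$ with the conclusion $c\neq 0,-1$. No gaps beyond the Magma computations that the paper itself relies on.
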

With Lemma \ref{4th lemma} in place, we are ready to relate the elements of $S^{(4)}$ with the rational points on certain curves. If $c\in S^{(4)}$ then $f_c^4$ is irreducible and $f_c^4(0)$ is not a rational square (see the proof of the Lemma \ref{4th lemma}). However, since the extension $K_4/K_3$ is not maximal by assumption, Lemma \ref{Stoll-lemma} implies that $\sqrt{f_c^4(0)}\in\ K_3$. Hence $\sqrt{f_c^4(0)}$ must live in one of the seven quadratic subfields of $K_3$ listed in Lemma \ref{4th lemma}. Thus, there must exist $y\in\mathbb{Q}$ such that $(c,y)$ is a rational point on one of the following curves: 
\begin{align*}
F_1: y^2&=\frac{f_x^4(0)}{-x}=-(x^7 + 4x^6 + 6x^5 + 6x^4 + 5x^3 + 2x^2 + x + 1),\\
F_2: y^2&=\frac{f_x^4(0)}{f_x^2(0)}=x^6 + 3x^5 + 3x^4 + 3x^3 + 2x^2 + 1,\\
F_3: y^2&=\frac{f_x^4(0)}{-(x+1)}=-x(x^6 + 3x^5 + 3x^4 + 3x^3 + 2x^2 + 1),\\
F_4: y^2&=\frac{f_x^4(0)}{x}\cdot\frac{f_x^3(0)}{x}=(x^7 + 4x^6 + 6x^5 + 6x^4 + 5x^3 + 2x^2 + x + 1)\cdot(x^3 + 2x^2 + x + 1),\\
F_5: y^2&= f_x^4(0)\cdot\frac{f_x^3(0)}{-x}=(x^8 + 4x^7 + 6x^6 + 6x^5 + 5x^4 + 2x^3 + x^2 + x)\cdot-(x^3 + 2x^2 + x + 1),\\
F_6: y^2&=\frac{f_x^4(0)}{f_x^2(0)}\cdot f_x^3(0)=(x^6 + 3x^5 + 3x^4 + 3x^3 + 2x^2 + 1)\cdot(x^4 + 2x^3 + x^2 + x),\\
F_7: y^2&=\frac{f_x^4(0)}{-(f_x^2(0))}\cdot \frac{f_x^3(0)}{x}=-(x^6 + 3x^5 + 3x^4 + 3x^3 + 2x^2 + 1)\cdot(x^3 + 2x^2 + x + 1),
\end{align*} (after dividing out the finite singularities coming from $c=0,-1$ when necessary).

Note that all of these curves are hyperelliptic, and so at least in principal, their arithmetic: ranks, integer points, etc. are more easily computable. Also note that the interesting rational points corresponding to known elements of $S^{(4)}$ both come from $F_2(\mathbb{Q})$:
\begin{equation}{\label{point list}}
\big\{\infty^+,\infty^-,(0,\pm{1}),(2/3,\pm{53/27}),(-6/7,\pm{377/343})\big\}\subseteq F_2(\mathbb{Q})
\end{equation} 
Therefore, to describe $S^{(4)}$ it suffices to characterize $F_i(\mathbb{Q})$ for all $1\leq i\leq7$. We will do this sequentially, employing standard methods in the theory of rational points on curves. For a nice overview of these techniques, see \cite{Stoll-R}. 

\textbf{Case 1:} A 2-descent with Magma  \cite{Magma} shows that the rational points on the Jacobian of $F_1$ have rank one. Moreover, reducing modulo various primes of good reduction, one sees that the order of any rational torsion point must divide $4$. However, the only $2$-torsion point is $Q=[(-1,0)-\infty]$ and by examining the image of $Q$ via the $2$-descent map, one sees that $Q$ is not a double in $J(F_1)(\mathbb{Q})$. It follows that $J(F_1)(\mathbb{Q})\cong\mathbb{Z}/2\mathbb{Z}\oplus\mathbb{Z}$. Since $F_1$ has genus three, we can apply the method of Chabauty and Coleman to bound the rational points (in fact, find them all).  

To do this we change variables to obtain an equation which is more amenable to the computations to come: send $(x,y)\rightarrow(-x-2,y)$ to map to the curve \[F'_1: y^2=x^7+10x^6+42x^5+94x^4+117x^3+76x^2+21x+1.\] A naive point search yields $\{\infty, (-1,0), (0,\pm{1})\}\subseteq F_1'(\mathbb{Q})$ and we will show this set is exhaustive. 

Let $J=J(F_1')$ and use the point $P_0=(0,1)$ to define an embedding of $F_1'(\mathbb{Q})\subseteq J(\mathbb{Q})\subseteq J(\mathbb{Q}_3)$ via $P\rightarrow[P-P_0]$. Then given a $1$-form $\omega$ on $J(\mathbb{Q}_3)$, one can integrate to form the function \[J(\mathbb{Q}_3)\rightarrow\mathbb{Q}_3,\;\;\;\;\text{given by}\;\;\;\; P\rightarrow\int_{0}^P\omega.\] Coleman's idea was to notice that if we restrict this function to a residue class of $F_1'(\mathbb{Q}_3)\subset J(\mathbb{Q}_3)$, then this function can be computed explicitly in terms of power series (using a parametrization coming from a uniformizer for the class). Moreover, in the case when the genus is larger than the rank of the group $J(\mathbb{Q})$, as is the case in our example, one can find an $\omega$ where the above function vanishes on $J(\mathbb{Q})$. Finally, using Newton polygons, one can bound the number of rational points in each residue class by bounding the number of zeros of a power series in $\mathbb{Z}_3$. This is what we will do. For a nice exposition on this method, see \cite{Poonen}.      

We will follow the notation and outline of Example $1$ in Section $9$ of Wetherell's thesis \cite{Wetherell}. In particular, we use $x$ as a local coordinate system on the residue class at $(0,1)$ and  the basis $\eta_0=(1/y)dx, \eta_1=(x/y)dx$, and $\eta_2=(x^2/y)dx$ for the global forms on $F_1'$. Expanding $1/y$ in a power series in terms of $x$ we get:  
\[\eta_0=\frac{dx}{y}=1 -\frac{ 21}{2}x + \frac{1019}{8}x^2 - \frac{28089}{16}x^3 + \frac{3292019}{128}x^4 - \frac{99637707}{256}x^5 + \frac{6153979535}{1024}x^6\dots\]
Furthermore, it is known that the $\eta_i$ are in $\mathbb{Z}_3[[x]]$. Then we have the integrals $\lambda_i$ for the $\eta_i$ in the residue class of $(0,1)$; that is 
\[\lambda_i(P)=\int_{(0,1)}^P\eta_i.\] 
From our formulas for the $\eta_i$, we have: 
\begin{small}\[\lambda_0=x-\frac{21}{4}x^2+\frac{1019}{24}x^3-\frac{28089}{64}x^4+\frac{3292019}{640}x^5-\frac{33212569}{512}x^6+\frac{6153979535}{7168}x^7-\dots\] 
\[\lambda_1=\frac{1}{2}x^2-\frac{7}{2}x^3+\frac{1019}{32}x^4-\frac{28089}{80}x^5+\frac{3292019}{768}x^6-\frac{99637707}{1792}x^7+ \frac{6153979535}{8192}x^8 -\dots \]
\[\lambda_2=\frac{1}{3}x^3 - \frac{21}{8}x^4 + \frac{1019}{40}x^5 - \frac{9363}{32}x^6 + \frac{3292019}{896}x^7 - \frac{99637707}{2048}x^8+ \frac{6153979535}{9216}x^9+\dots\]\end{small}   
Let $\omega_i$ be the differentials on $J$ corresponding to the $\eta_i$ on $F_1'$, i.e. the pullbacks relative to the inclusion $F_1'(\mathbb{Q}_3)\subseteq J(\mathbb{Q}_3)$ given by $P\rightarrow[P-(0,1)]$. Finally, let $\lambda_i'$ be the homomorphism from $J(\mathbb{Q}_3)$ to $\mathbb{Q}_3$ obtained by integrating the $\omega_i$. We will calculate the $\lambda_i'$ on $J_1(\mathbb{Q}_3)$, the kernel of the reduction map. 

Let $a\in J_1(\mathbb{Q}_3)$, so that $a$ may be represented as $a=[P_1+P_2+P_3-3P_0]$ with $P_i\in C(\bar{\mathbb{Q}}_3)$ and $\widebar{P_i}=\widebar{P_0}=(0,1)$. If $s_j=\sum_{i=1}^{3}x(P_i)^j$, then from the expression \[\int_0^a\omega_i=\sum_j\int_{(0,1)}^{P_j}\eta_i\] we see that 
\begin{small} \[\lambda_0=s_1-\frac{21}{4}s_2+\frac{1019}{24}s_3-\frac{28089}{64}s_4+\frac{3292019}{640}s_5-\frac{33212569}{512}s_6+\frac{6153979535}{7168}s_7-\dots\] 
\[\lambda_1=\frac{1}{2}s_2-\frac{7}{2}s_3+\frac{1019}{32}s_4-\frac{28089}{80}s_5+\frac{3292019}{768}s_6-\frac{99637707}{1792}s_7+ \frac{6153979535}{8192}s_8 -\dots \]
\[\lambda_2=\frac{1}{3}s_3 - \frac{21}{8}s_4 + \frac{1019}{40}s_5 - \frac{9363}{32}s_6 + \frac{3292019}{896}s_7 - \frac{99637707}{2048}s_8+ \frac{6153979535}{9216}s_9+\dots\]\end{small} 
We wish to find an $\omega$ whose integral kills $J(\mathbb{Q})$. However, since $\log(J(\mathbb{Q}))$ has rank one in $T_0(J(\mathbb{Q}_3))$, the dimension of such differentials is $2$, and so we have some freedom with our choice. We will exploit this freedom to bound the number of points in each residue field. 

Note that if $U=[\infty-P_0]$, then $12U$ is in $J_1(\mathbb{Q}_3)$. Furthermore, a $1$-form kills $J(\mathbb{Q})$ if and only if it kills $12U$, since the index of the subgroup generated by the rational torsion and $U$ is coprime to $|J(\mathbb{F}_3)|=24$. Using Magma, we calculate the divisor $12U$ represented as $[P_1+P_2+P_3-3P_0]$ where the $3$ symmetric functions in the $x(P_i)$ are          
 \begin{small}\begin{align*}
\sigma_1&=\frac{5688167583876464940561144764011383197382945288}{5528939601706074645413409528185601232466043121}\equiv2\cdot3^4\bmod{3^5},\\
\\
\sigma_2&=\frac{- 2183647192786560140353830791558556354713308560}{5528939601706074645413409528185601232466043121}\equiv2^2\cdot3^3\bmod{3^5},\\
\\
\sigma_3&=\frac{4352156372570507181684433225178910249832181376}{5528939601706074645413409528185601232466043121}\equiv2^3\cdot3^3\bmod{3^5}.
\end{align*}
\end{small} 
Choosing a precision of $3^5$ was arbitrary, though sufficient for our purposes. Note that the valuation of every $x(P_i)$ is at least $\min\{v(\sigma_i)/i\}=\min\{4,3/2,1\}=1$. It follows that $v(s_j)\geq j$. Moreover, one verifies that every term past $j=3$ of $\lambda_i(12U)$ is congruent to $0\bmod{3^5}$. 
 
After calculating the $s_j$ in terms of the $\sigma_i$, one has that 
\begin{align*}
\lambda_0(12U)&\equiv 2\cdot3^3\bmod{3^5},\\      
\lambda_1(12U)&\equiv3^3\bmod{3^5},\\ 
\lambda_2(12U)&\equiv2^3\cdot3^3 \bmod{3^5}.
\end{align*} 
As the integral is linear in the integrand, there exist global one forms $\alpha$ and $\beta$ such that \[\int_0^P\alpha=0\;\;\;\; \text{and}\;\;\;\;\;\int_0^P\beta=0\;\;\;\;\;\text{for all}\;\;\;P\in \J(\mathbb{Q}),\] with $\alpha\equiv2\lambda_1-\lambda_0\bmod{3^5}$ and $\beta\equiv2\lambda_2-4\lambda_0\bmod{3^5}$. Moreover, we have that $\alpha=(x-1)dx/y$ and $\beta=(x^2-1)dx/y$ when we view them over $\mathbb{F}_3$. However, for every $P\in F_1'(\mathbb{F}_3)$, either $\alpha$ or $\beta$ does not vanish at $P$; see the table below.  
\begin{center}
  \begin{tabular}{|c| c | c| }
    \hline
    $P\in F_1'(\mathbb{F}_p)$& $ord_P(\alpha)$&$ord_P(\beta)$\\ \hline
    $\widebar{\infty}$ & 2 & 0 \\ \hline
    $\widebar{(0,1)}$ & 0 & 0 \\ \hline
    $\widebar{(0,-1)}$ & 0 & 0 \\ \hline
    $\widebar{(-1,0)}$&0&1\\
    \hline
  \end{tabular}
\end{center}
It follows that every residue class in $F_1'(\mathbb{Q}_3)$ contains at most one rational point, and hence exactly one rational point as claimed.

\textbf{Case 2:} We first use Runge's method to find $F_2(\mathbb{Z})$. This involves completing the square. Suppose we have an integer solution $x$. We rewrite our equation as \[y^2-(x^3+3/2x^2+3/8x+15/16)^2 = -61/64x^2 - 45/64x + 31/256,\] and then multiply by $256$ to clear denominators. Write 
\[g=16x^3 + 24x^2 + 6x + 15,\;\;\;\;
h= -244x^2 - 180x + 31,\;\;\;\;
Y=16y.\] Then $(Y-g)(Y+g)=h$, and (unless one of the factors is zero) $|Y-g| \le |h|$ and $|Y+g| \le |h|$. Note that neither factor can be zero since $h$ has no integer roots. After combining our inequalities, we see that 
\begin{equation}{\label{Runge}}
| 2g | = | (Y+g) - (Y-g)| \le 2 |h|.
\end{equation} 
Hence $|g| \le |h|$. As the degree of $g$ is larger than $h$, we get a small bound on $x$. A naive point search shows that $F_2(\mathbb{Z})=\{\infty^{\pm},(-1,\pm{1}),(0,\pm{1}),(-2,\pm{1})\}$. 

As for the full rational points, this is (at the moment) beyond reach; for an explanation, see Remark \ref{difficulty} at the end of this section. At best, we can be sure that the unknown rational points must be of very large height (on the order of $10^{100}$) by running the Mordell-Weil sieve. Since our curve is of genus $2$, we can use explicit bounds between the Weil and canonical heights to compute a basis of $J(F_2)(\mathbb{Q})$. It follows that $J(F_2)(\mathbb{Q})$ has basis \begin{small}\[ P_1=[(0,-1)+(-1,-1)-\infty^--\infty^+], P_2=[(0,-1)+(0,-1)-\infty^--\infty^+], P_3=[(0,1)+(-2,1)-\infty^--\infty^+],\]\end{small} which we use while sieving with Magma; see \cite{Sieve} for a full discussion of the Mordell-Weil sieve, or \cite{Stoll-R} for a basic introduction;            

\textbf{Case 3:} For $F_3$ and subsequent curves, we use unramified covers to determine the rational points. Note that the resultant $\Res(x^6 + 3x^5 + 3x^4 + 3x^3 + 2x^2 + 1,-x)=1$,  and we study the curves \[D^{(d)}: \;du^2= x^6 + 3x^5 + 3x^4 + 3x^3 + 2x^2 + 1, \;\;\;dv^2=-x\;\;\;d\in\{\pm1\},\] which are $\mathbb{Z}/2\mathbb{Z}$-covers of $F_3$. Moreover, every rational point on $F_3$ lifts to one on some $D^{(d)}$; see \cite[Example 9]{Stoll-R}. If $d=-1$, then the curve $du^2= x^6 + 3x^5 + 3x^4 + 3x^3 + 2x^2 + 1$ has no rational points, since it has no points in $\mathbb{F}_3$. Hence $D^{(-1)}$ has no rational points. On the other hand, if $d=1$, then our description of $F_2(\mathbb{Z})$ shows that $F_3(\mathbb{Z})=\{\infty, (-1,\pm{1}), (0,0)\}$. Moreover, if we assume that there are no unknown points in $F_2(\mathbb{Q})$, then we have found all of the rational points on $F_3$. In any case $0,-1\notin S^{(4)}$. Hence, $F_3$ contributes no integers to $S^{(4)}$. 

\textbf{Case 4:} Similarly for $F_4$ we have the covers \[D^{(d)}: du^2=(x+1)\cdot(x^3 + 2x^2 + x + 1),\;\;\; dv^2=x^6 + 3x^5 + 3x^4 + 3x^3 + 2x^2 + 1,\;\;\;d\in\{\pm{1}\}.\] Again, if $d=-1$, then one sees that there are no rational points on $D^{(d)}$ by looking in $\mathbb{F}_3$. When $d=1$, the second defining equation of $D$ is that of $F_2$, and so we use our description of $F_2(\mathbb{Z})$ to show that $F_4(\mathbb{Z})=\{\infty^{\pm},(-1,0),(0,\pm{1}),(-2,\pm{1})\}$. Under the assumption that there are no unknown rational points on $F_2$, we can conclude that $F_4(\mathbb{Z})=F_4(\mathbb{Q})$. In any event $0,-1,-2\notin S^{(4)}$, and so $F_4$ also contributes nothing to $S^{(4)}$.   

\textbf{Case 5:} The Jacobian of $F_5$ has rank zero, and we easily determine that $F_5(\mathbb{Q})=\{\infty^{\pm}, (0,0)\}$. Since $0\notin S^{(4)}$, we conclude that $F_5$ contributes no integers to $S^{(4)}$. 

\textbf{Case 6:} Since $\Res(x^6 + 3x^5 + 3x^4 + 3x^3 + 2x^2 + 1, x^4+2x^3+x^2+x)=-1$, the rational points on $F_6$ are covered by the points on \[D^{(d)}: du^2=x^6 + 3x^5 + 3x^4 + 3x^3 + 2x^2 + 1,\;\;\; dv^2=x^4+2x^3+x^2+x,\;\;\;d\in\{\pm{1}\}.\] If $d=1$, then the equation $u^2=x^4+2x^3+x^2+x$ is an elliptic curve of rank zero having rational points $\{\infty^{\pm}, (0,0)\}$. It follows that $D^{(1)}(\mathbb{Q})$ has only the points at infinity and those corresponding to $x=0$. If $d=-1$, then $D^{(-1)}$ has no points over $\mathbb{F}_3$. We conclude that $F_6(\mathbb{Q})=\{\infty^{\pm},(0,0)\}$ unconditionally, and nothing new is added to $S^{(4)}$ in this case.  

\textbf{Case 7:} The rational points on the final curve $F_7$ are covered by the points on \[D^{(d)}: du^2=x^6 + 3x^5 + 3x^4 + 3x^3 + 2x^2 + 1,\;\;\; dv^2=-(x^3+2x^2+x+1),\;\;\;d\in\{\pm{1}\}.\] As in previous cases, if $d=-1$, then $D^{(-1)}$ has no points over $\mathbb{F}_3$. On the other hand, when $d=1$ we use our description of $F_2(\mathbb{Z})$ and  $F_2(\mathbb{Q})$ do determine that $F_7(\mathbb{Z})=\{\infty, (-2,\pm{1})\}$. Like before, if there are no unknown rational points on $F_2$, then we will have given a complete list of points on $F_7$. Note that $-2\notin S^{(4)}$ and so, in combination with the previous cases, we have shown that $S^{(4)}\cap\mathbb{Z}=\emptyset$.        
 
Finally, if $c\neq3$ is an integer such that $G_2(f_c)\cong\Aut(T_2)$, then $G_3(f_c)\cong\Aut(T_3)$ by \cite[Corollary 3.1]{Me}. However, since $S^{(4)}\cap\mathbb{Z}=\emptyset$, we deduce that $G_4(f_c)\cong\Aut(T_4)$ also.             
\end{proof}
\begin{remark}{\label{difficulty}} At the moment, proving that we have determined $F_2(\mathbb{Q})$ is far beyond reach. For one, the Galois group of $x^6 + 3x^5 + 3x^4 + 3x^3 + 2x^2 + 1$ is S$_6$, from which it follows that $\End(J(F_2))\cong\mathbb{Z}$; see \cite{Zarhin}. In particular, $F_2$ does not map to any lower genus (elliptic) curves. Morevover, the rank of $J(F_2)(\mathbb{Q})$ is $3$, and so the method of Chabauty and Coleman is not applicable. Furthermore, in order to use a covering collection coming from the pullback of the multiplication by $2$ map on the Jacobian, one would need to determine generators of the Mordell-Weil group $E(K)$, where $E$ is an elliptic curve defined over a number field $K$ of degree $17$, followed by Elliptic Chabauty. This is currently not feasible.\end{remark}       

A naive point search on the relevant curves suggests that $S^{(5)}$ and $S^{(6)}$ are probably empty. In fact, if $n=5$, then the $15$ curves, corresponding to the $2^4-1$ quadratic subfields of $K_4$, satisfy the Chabauty condition (small rank), and so proving that $S^{(5)}$ is empty may be doable. This begs the question as to whether all $n$ sufficiently large satisfy $S^{(n)}=\emptyset$. Is it true for all $n\geq5$?

This seems beyond reach at the moment. However, the weaker statement that $S^{(n)}\cap\mathbb{Z}=\emptyset$ may be attackable if one assumes standard conjectures on the height of integer points on hyperelliptic curves relative to the size of the defining coefficients. If true, this would amount to a nice Galois uniformity principle. Namely, outside of the small exception $x^2+3$, it seems as though $G_2(f_c)\cong\Aut(\mathbb{T}_2)$ already implies $G_n(f_c)\cong\Aut(\mathbb{T}_n)$ for all $n\geq1$.     
\\
\\
\textbf{Acknowledgements:} It is a pleasure to thank my advisor, Joe Silverman, as well as Michael Stoll, Rafe Jones, Samir Siksek, Reinier Broker, Daniel Hermes, and Vivian Olsiewski Healey for the many useful discussions while this work was in progress. I also thank the referee for their helpful comments.

\begin{center}\textbf{Mailing Adress:} 
\\
Wade Hindes \\
Department of Mathematics, Brown University\\
Box $1917$\\
$151$ Thayer Street\\
 Providence, RI $02912$\end{center} 
\end{document}